\documentclass[a4paper,11pt]{article}

\usepackage{amsmath}
\usepackage{amsthm}
\usepackage{amsfonts}
\usepackage{amssymb}
\usepackage{amsrefs}
\usepackage{enumerate}
\usepackage{xfrac}
\usepackage{color} 

\usepackage{mathpazo}     
\fontsize{11}{14}  

\usepackage[usenames,dvipsnames]{xcolor}
\usepackage{hyperref}
\hypersetup{
   colorlinks,
   citecolor=ForestGreen,
   filecolor=blue,
   linkcolor=Maroon,
   urlcolor=orange
}

\newtheorem{theorem}{Theorem}[section]
\newtheorem{proposition}[theorem]{Proposition}
\newtheorem{lemma}[theorem]{Lemma}
\newtheorem{corollary}[theorem]{Corollary}
\theoremstyle{definition}
\newtheorem{definition}[theorem]{Definition}

\newtheorem{remark}[theorem]{Remark}
\newtheorem{convention}[theorem]{Convention}

\newcommand{\NN}{\mathbb{N}}
\newcommand{\RR}{\mathbb{R}}
\newcommand{\ZZ}{\mathbb{Z}}
\newcommand{\GLnZ}{GL_{n}(\ZZ)}
\newcommand{\GLnFt}{GL_{n}(F[t])}
\DeclareMathOperator{\diam}{diam}

\newcommand{\Space}{X}

\newcommand{\dimn}{n}


\newcommand{\Group}{G}
\newcommand{\groupelt}{g}

\newcommand{\Family}{\mathcal{F}}
\newcommand{\Vcyc}{\mathcal{V}cyc}



\newcommand{\GroupFin}{F}
\newcommand{\groupfinelt}{g}
\newcommand{\groupfineltprime}{g'}
\newcommand{\groupfineltother}{h}
\newcommand{\groupfineltotherprime}{h'}



\newcommand{\FMetricSpace}{Z}
\newcommand{\OpenSubsetsZ}{\mathcal{P}^{\circ}(\FMetricSpace)}
\newcommand{\fmetricspaceelt}{z}
\newcommand{\fmetricspaceeltprime}{z'}
\newcommand{\FMetricSpaceSubset}{Y}
\newcommand{\FMetricSpaceQuotient}{\GroupFin \backslash \FMetricSpace}

\newcommand{\GSpace}{Y}

\newcommand{\GMetricSpace}{X}
\newcommand{\GMetricSpaceCpct}{\overline{X}}
\newcommand{\CoverArb}{\mathcal{U}}
\newcommand{\coverelt}{U}
\newcommand{\RefQuotient}{\mathcal{V}}
\newcommand{\refquotelt}{V}
\newcommand{\refquoteltsuperset}{\coverelt_{\refquotelt}}
\newcommand{\refquoteltprime}{\refquotelt'}
\newcommand{\refquoteltprimesuperset}{\coverelt_{\refquoteltprime}}
\newcommand{\Refinement}{\mathcal{W}}
\newcommand{\refelt}{W}
\newcommand{\refeltexpression}{\pi^{-1}(\refquotelt) \cap \groupfineltother \cdotp \refquoteltsuperset}
\newcommand{\refeltprime}{W'}
\newcommand{\refeltprimeexpression}{\pi^{-1}(\refquoteltprime) \cap \groupfineltotherprime \cdotp \refquoteltprimesuperset}









\newcommand{\Collection}{\mathcal{U}}
\newcommand{\collectionelt}{U}

\newcommand{\BoundaryBound}{k}

\newcommand{\IndCover}{\mathcal{V}}
\newcommand{\indcoverelt}{V}
\newcommand{\indcovereltclosure}{\overline{\indcoverelt}}
\newcommand{\indcovereltclosureinterior}{\overline{\indcoverelt}^{\circ}}
\newcommand{\indcovereltprime}{\indcoverelt'}
\newcommand{\indcovereltprimeclosure}{\overline{\indcovereltprime}}
\newcommand{\DiameterBound}{\delta}

\newcommand{\distance}{d}
\newcommand{\Ball}{B}

 
\newcommand{\BallZ}[2]{\Ball_{#2}(#1)}

\newcommand{\FS}{FS}

\newcommand{\distanceFS}{\distance_{\FS}}
\newcommand{\flow}{\Phi}

\newcommand{\GPer}{\text{per}^{\Group}_{\flow}}
\newcommand{\SplittingConst}{\gamma}
\newcommand{\FSshort}{\FS_{\leq \SplittingConst}}
\newcommand{\FSlong}{\FS_{> \SplittingConst}}

\newcommand{\FSnonstationary}{\FS \backslash \FS^{\RR}}

\newcommand{\Rips}{X}
\newcommand{\RipsCpct}{\overline{\Rips}}


\newcommand{\SimpCx}{K}
\newcommand{\SimpCover}{\mathcal{C}}
\newcommand{\simpcoverelt}{C}
\newcommand{\SimpCxBarycentric}{b\SimpCx}
\newcommand{\barycentre}[1]{\hat{#1}}
\newcommand{\Star}[1]{\text{star}_{#1}}
\newcommand{\openstar}{\Star{\SimpCx}}
\newcommand{\barycentricstar}[1]{\Star{\SimpCxBarycentric}(\barycentre{#1})}
\newcommand{\simplexint}{\text{int}}

\newcommand{\Nerve}{\mathcal{N}}
\newcounter{commentcounter}

%
%

%

\title{Equivariant Refinements}
\author{Adam Mole\thanks{Supported by SFB 878 - Groups, Geometry and Actions.} \and Henrik R{\"u}ping\thanks{Supported by the Leibniz prize of Prof. Dr. Wolfgang L{\"u}ck granted by the deutsche Forschungsgemeinschaft.}}

\begin{document}

\maketitle

\begin{abstract}
 We show that if an open cover of a finite dimensional space is equivariant with respect to some finite group action on the space
  then there is an equivariant refinement of bounded dimension. 
This will generalize some constructions of certain covers. Those generalizations play a key role in the proof of the Farrell-Jones conjecture for the general linear group over a finite field.
\end{abstract}

\section{Introduction}

If a metric space~$\Space$ has covering dimension~$\leq \dimn$ then any open cover of~$\Space$ has a refinement of dimension~$\leq \dimn$,
  i.e. a refinement such that every point of~$\Space$ is contained in at most~$\dimn + 1$ sets from the refinement.

If a group~$\Group$ acts on~$\Space$ then there is an induced action of~$\Group$ on the set of subsets of~$\Space$, and we can call a cover~$\CoverArb$ of~$\Space$ \emph{equivariant} if is invariant under this induced action.
Then define a~$\Group$-cover of~$\Space$ to be an equivariant cover of~$\Space$ such that every translate of every set~$\coverelt$ in the cover is either disjoint from or coincides with~$\coverelt$.

It then seems a natural question to ask if, given a $\Group$-space~$\GSpace$ of covering dimension~$\leq \dimn$, does every $\Group$-cover of~$\GSpace$ have a $\Group$-refinement of dimension~$\leq \dimn$? 

In this article we show that for a finite group~$\GroupFin$ acting by isometries on a metric space~$\FMetricSpace$ there is always an $\GroupFin$-refinement, i.e. a refinement that is itself an $\GroupFin$-cover.
This will be done in Section~\ref{Section:EquivariantRefinements}.

\subsection{Application To The Farrell-Jones conjecture}

A family $\Family$ of subgroups of a group $\Group$ is a set of subgroups closed under conjugation and taking subgroups. 
In \cite{BartelsLueckReich2008KFJC}, Bartels-L\"uck-Reich proved an axiomatic formulation of the $K$-theoretic Farrell-Jones conjecture for a finitely generated group~$\Group$, with an arbitrary family~$\Family$ of subgroups of~$\Group$.
One of the conditions in their formulation is the existence of a metric $\Group$\nobreakdash-\hspace{0pt}space~$\GMetricSpace$ with a compactification~$\GMetricSpaceCpct$ of~$\GMetricSpace$ such that~$\Group \times \GMetricSpaceCpct$ has \emph{wide open $\Family$-covers} (see~\cite{BartelsLueckReich2008KFJC}*{Assumption 1.4}).

For the special case that the group~$\Group$ is hyperbolic, such covers for the compactification~$\RipsCpct$ of the Rips complex were constructed for the family~$\Vcyc$ of virtually cyclic subgroups in~\cite{BartelsLueckReich2008ECHG}.
There they constructed a flow space~$\FS$ together with a map~$\Group \times \GMetricSpaceCpct \rightarrow \FS$.
They then used the flow on~$\FS$ to carefully construct a cover of~$\FS$, which they pull-back to~$\Group \times \GMetricSpaceCpct$ to get a cover with the desired properties.

However, the construction uses only certain properties that can be derived from the hyperbolicity of the group and the choice of~$\GMetricSpace$ and~$\GMetricSpaceCpct$.
The full list of conditions was stated in~\cite{BartelsLueck2010CAT0Flow}*{Convention 5.1}.
One of these properties is the existence of a bound on the order of finite subgroups of~$\Group$, and it is this condition we will show is unnecessary by adapting the construction of the covers on the flow space.

If a group acts properly, isometrically and cocompactly on a CAT(0) space, it has such a bound. In \cite{BartelsLueckReichRueping2012GLnZ} the cocompactness condition was weakened to show the Farrell-Jones conjecture for $\GLnZ$. A group satisfying this weaker condition need not have a bound on the order of finite subgroups, however $\GLnZ$ still does.  For a finite field the groups $\GLnFt$ still satisfy those weaker conditions, as shown in \cite{HenrikThesis}. However these groups do not have a bound on the order of finite subgroups. Removing this bound is a crucial step in the proof of the Farrell-Jones conjecture for $S$-arithmetic groups over function fields. 


\section{Equivariant Refinements}\label{Section:EquivariantRefinements}

First we recall the definition of the covering dimension of a topological space.
A space has dimension $\le n$ if and only if every finite open cover has a finite $n$-dimensional refinement (see \cite{Pears1975DimensionTheory}*{Definition~3.1.1}). However we have 

 \begin{theorem}[\cite{Pears1975DimensionTheory}*{Theorem~3.4.3}]\label{Thm:OstrandsThm} A normal space  has dimension $\le n$, if and only if every locally finite open cover has an open refinement of dimension $\le n$.
\end{theorem}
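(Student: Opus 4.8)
The plan is to verify the two implications separately; only one of them uses normality. The right-to-left implication is the easy one, so I would dispose of it first. Given that every locally finite open cover of $X$ admits an open refinement of dimension $\le n$, I want to recover Pears's definition of $\dim X \le n$, namely that every \emph{finite} open cover has a \emph{finite} open refinement of dimension $\le n$. So let $\{U_1,\dots,U_k\}$ be a finite open cover; being finite it is in particular locally finite, hence it has an open refinement $\mathcal{V}$ with $\dim\mathcal{V}\le n$. I would then collapse $\mathcal{V}$ back onto the index set $\{1,\dots,k\}$: choose for each $V\in\mathcal{V}$ an index $f(V)$ with $V\subseteq U_{f(V)}$, and set $W_i=\bigcup\{V\in\mathcal{V}:f(V)=i\}$. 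The family $\{W_1,\dots,W_k\}$ is then a finite open refinement of $\{U_i\}$, and its order is still $\le n$: if a point lay in $n+2$ of the $W_i$ with distinct indices, it would lie in $n+2$ members of $\mathcal{V}$ that are pairwise distinct (their $f$-values differ), contradicting $\dim\mathcal{V}\le n$. No normality is needed here.

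For the converse I would assume $\dim X\le n$ and take a locally finite open cover $\mathcal{U}=\{U_\lambda\}_{\lambda\in\Lambda}$. It suffices to produce an open \emph{shrinking} $\{V_\lambda\}_{\lambda\in\Lambda}$ (with $V_\lambda\subseteq U_\lambda$ and $\bigcup V_\lambda = X$) of order $\le n$, since such a shrinking is already the refinement we want. The plan is a transfinite recursion: well-order $\Lambda$ and build the $V_\alpha$ one at a time, keeping the invariants (i) $\overline{V_\beta}\subseteq U_\beta$ for $\beta<\alpha$, (ii) $\{V_\beta:\beta<\alpha\}\cup\{U_\delta:\delta\ge\alpha\}$ is still a cover, and (iii) $\{V_\beta:\beta<\alpha\}$ has order $\le n$. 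Invariant (iii) survives limit ordinals automatically, and (ii) survives them using local finiteness (a point meets only finitely many $U_\delta$, so one can compare with an earlier stage). At a successor stage $\alpha$ the set $C_\alpha = X\setminus\bigl(\bigcup_{\beta<\alpha}V_\beta\cup\bigcup_{\delta>\alpha}U_\delta\bigr)$ is closed, is contained in $U_\alpha$ by (ii), and is disjoint from every earlier $V_\beta$; one must then choose an open $V_\alpha$ with $C_\alpha\subseteq V_\alpha\subseteq\overline{V_\alpha}\subseteq U_\alpha$ that additionally misses every $(n+1)$-fold intersection $V_{\beta_1}\cap\dots\cap V_{\beta_{n+1}}$ of previously chosen sets — this last property is exactly what keeps (iii) from breaking when $V_\alpha$ is added.

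The hard part is precisely that successor step. Bare normality only produces \emph{some} open set wedged between $C_\alpha$ and $U_\alpha$, and nothing stops it from meeting too many earlier overlaps and pushing the order above $n$. This is where $\dim X\le n$, in its finite-cover form, has to enter. Since $\{V_\beta:\beta<\alpha\}$ is refined by the locally finite $\mathcal{U}$ it is itself locally finite, so near any point only finitely many $V_\beta$ and $U_\delta$ matter; the idea is to apply the finite-cover dimension bound locally to push $C_\alpha$ into $U_\alpha$ away from the union of the bad $(n+1)$-fold intersections, and then to glue these local choices into one global $V_\alpha$ using the shrinking lemma for locally finite covers of a normal space together with Urysohn functions. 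Turning the finite-cover hypothesis into this global statement while respecting the transfinite bookkeeping is, I expect, the genuine technical obstacle; everything else (the limit-stage arguments, and checking that the resulting shrinking really has order $\le n$) is routine. As a sanity check on the strategy there is a more conceptual alternative that sidesteps the recursion: take a partition of unity subordinate to $\mathcal{U}$, read it as a map from $X$ into the nerve of $\mathcal{U}$, and use the characterization of $\dim X\le n$ by extendability of maps into $S^n$ to deform this map, skeleton by skeleton, into the $n$-skeleton; the preimages of the open vertex stars then give a shrinking of order $\le n$. That route is cleaner to state but concentrates all the difficulty in the deformation step, so for a self-contained argument I would still favour the transfinite construction above.
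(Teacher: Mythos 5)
The paper does not prove this statement; it is a direct citation from Pears, so there is no in-text argument to compare yours against. Judged on its own terms, your proposal is half a proof and half a plan.

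Your right-to-left implication is correct and complete. Collapsing an $n$\nobreakdash-dimensional open refinement $\mathcal{V}$ of a finite cover $\{U_1,\dots,U_k\}$ along a choice function $f$ and taking unions per index yields a finite open refinement whose order is still $\le n$, since $n+2$ of the $W_i$ containing a point would yield $n+2$ distinct members of $\mathcal{V}$ containing it; and this recovers Pears's finite-cover definition of $\dim X\le n$. No normality is used, as you note.

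The left-to-right implication, however, is not just incompletely written — its central step is broken as formulated. At the successor stage you ask for an open $V_\alpha$ with $C_\alpha\subseteq V_\alpha\subseteq\overline{V_\alpha}\subseteq U_\alpha$ that misses the union $B$ of all $(n+1)$\nobreakdash-fold intersections of earlier $V_\beta$'s. But $B$ is open, and an open $V_\alpha$ that is disjoint from an open set is automatically disjoint from its closure; so you are in fact demanding $C_\alpha\cap\overline{B}=\emptyset$. Your invariants only give $C_\alpha\cap V_\beta=\emptyset$ for each $\beta<\alpha$, hence $C_\alpha\cap B=\emptyset$; but $C_\alpha$ can perfectly well sit on $\partial V_\beta$ for several $\beta$ and hence inside $\overline{B}$, in which case no admissible $V_\alpha$ exists and the recursion stalls. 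The remark that one should ``apply the finite-cover dimension bound locally and glue'' does not dissolve this: the finite-cover hypothesis is a statement about covers, and extracting from it the separation of $C_\alpha$ from $\overline{B}$ in the presence of infinitely many earlier choices is the whole content of the theorem, not a gluing chore. The standard proofs (Dowker, Morita; Engelking, \emph{Dimension Theory}, \S 3.2; Pears, \S 3.4) avoid exactly this trap, either by working with closed shrinkings and invariants stated in terms of closures from the outset, or — as in the ``conceptual alternative'' you mention at the end — by mapping into the nerve via a partition of unity subordinate to the locally finite cover and deforming into the $n$\nobreakdash-skeleton using the $S^n$\nobreakdash-extension characterization of $\dim\le n$. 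If you intend to complete a self-contained argument, the nerve route is the one to develop; in the recursive scheme as written the successor step is genuinely false, not merely unfinished.
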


Recall that a space is called \emph{paracompact} if every open cover has a locally finite open refinement. For example, metric spaces are paracompact by \cite{Pears1975DimensionTheory}*{Corollary~2.1.8} and normal. From Theorem~\ref{Thm:OstrandsThm} we immediately get the following corollary.

\begin{corollary}\label{Cor:CoveringDimForParacompactNormal} A paracompact, normal space has dimension $\le n$, if and only if any open cover has a refinement of dimension $\le n$.
\end{corollary}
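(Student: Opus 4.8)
The plan is to deduce Corollary~\ref{Cor:CoveringDimForParacompactNormal} directly from Theorem~\ref{Thm:OstrandsThm}, using paracompactness only to pass from an arbitrary open cover to a locally finite one. The statement is an ``if and only if'', so I would prove the two implications separately, though one direction is nearly immediate.

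\medskip

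\noindent\textbf{The easy direction.} Suppose every open cover of the paracompact normal space~$\Space$ has a refinement of dimension~$\leq n$. In particular every \emph{locally finite} open cover does, so by Theorem~\ref{Thm:OstrandsThm} the space has dimension~$\leq n$. (Here I should be slightly careful: Theorem~\ref{Thm:OstrandsThm} asks for an \emph{open} refinement of dimension~$\leq n$. If one's convention for ``refinement of dimension~$\leq n$'' already bakes in openness this is nothing; otherwise one notes that a refinement can be taken open without loss, or simply phrases the hypothesis of the corollary with open refinements, matching Theorem~\ref{Thm:OstrandsThm}. This bookkeeping about whether ``refinement'' means ``open refinement'' is the only place where any subtlety hides, and it is the step I would be most careful to state cleanly.)

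\medskip

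\noindent\textbf{The substantive direction.} Suppose~$\Space$ has dimension~$\leq n$ and let~$\CoverArb$ be an arbitrary open cover. Since~$\Space$ is paracompact, $\CoverArb$ admits a locally finite open refinement~$\CoverArb'$. By Theorem~\ref{Thm:OstrandsThm} applied to the locally finite open cover~$\CoverArb'$, there is an open refinement~$\CoverArb''$ of~$\CoverArb'$ of dimension~$\leq n$. Since refinement is transitive, $\CoverArb''$ is an open refinement of the original cover~$\CoverArb$, and it has dimension~$\leq n$. This proves the corollary.

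\medskip

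\noindent There is essentially no obstacle here; the only thing to get right is the transitivity of ``is a refinement of'' and the matching of hypotheses (locally finite vs.\ arbitrary, open vs.\ arbitrary) between the corollary and Theorem~\ref{Thm:OstrandsThm}. One could also remark, for context, that the hypotheses are satisfied by all metric spaces (cited in the excerpt as paracompact and normal), which is presumably the case of interest in the sequel.
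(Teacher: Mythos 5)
Your proof is correct and takes essentially the same route as the paper, which simply states that the corollary is ``immediate'' from Theorem~\ref{Thm:OstrandsThm}: you fill in the (short) gap by using paracompactness to pass to a locally finite open refinement before invoking the theorem, and by transitivity of refinement. Nothing more is needed.
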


So in a metric space of dimension~$\leq n$ we can find an $n$-dimensional refinement of an arbitrary cover.

Now we formally give the definition of a $\Group$-cover;
\begin{definition}\label{Defn:EquivariantCover}
 Let~$\Group$ be a group and let~$\GSpace$ be a topological space with a $\Group$-action.
 A cover~$\CoverArb$ of~$\GSpace$ is \emph{$\Group$-equivariant} if
 \[
  \forall \groupelt \in \Group, \, \forall \coverelt \in \CoverArb, \quad \groupelt \cdotp \coverelt \in \CoverArb.
 \]
 A \emph{$\Group$-cover} is a $\Group$-equivariant cover~$\CoverArb$ that also satisfies
 \[
  \forall \groupelt \in \Group, \, \forall \coverelt \in \CoverArb, \quad \groupelt \cdotp \coverelt \cap \coverelt \neq \emptyset \Rightarrow \groupelt \cdotp \coverelt = \coverelt.
 \]
 A \emph{$\Group$-refinement} of a cover is a refinement that is itself a $\Group$-cover.
\end{definition}

The goal of this section is to prove that for any finite group~$\GroupFin$ acting by isometries on an $\dimn$-dimensional metric space~$\FMetricSpace$, every open $\GroupFin$-cover of~$\FMetricSpace$ has an $\GroupFin$-refinement whose dimension is~$\leq \dimn$.

The idea is to project the cover down to the quotient space~$\GroupFin \backslash \FMetricSpace$ and take a refinement there, which can then be pulled back to give an equivariant cover of~$\FMetricSpace$ before taking careful intersections with the original cover to obtain an equivariant refinement.

For this we need to know the dimension of the quotient space.
We can use a general result about continuous open mappings in dimension theory.

\begin{proposition}[\cite{Pears1975DimensionTheory}*{Proposition 9.2.16}]\label{Propn:OpenCtsMappingDimensions}
 Let~$X,Y$ be weakly paracompact $T_4$-spaces.
 Let~$f \colon X \rightarrow Y$ be a continuous, open surjection.
 If for every point~$y \in Y$ the pre-image~$f^{-1}(y)$ is finite then
 \[
  \dim(X) = \dim(Y).
 \]
\end{proposition}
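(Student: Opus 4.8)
The plan is to establish the two inequalities $\dim Y \le \dim X$ and $\dim X \le \dim Y$ separately, each time through the covering-dimension criterion recalled above: a space has dimension $\le m$ if and only if every finite open cover admits a finite open refinement of order $\le m+1$. As a preliminary observation, for $y \in Y$ write $k(y) = |f^{-1}(y)|$; this function is lower semicontinuous, since if $f^{-1}(y) = \{p_1,\dots,p_{k(y)}\}$ we may separate the $p_i$ by pairwise disjoint open sets $O_1,\dots,O_{k(y)}$ (using that $X$ is $T_4$, hence Hausdorff), and then $\bigcap_i f(O_i)$ is a neighbourhood of $y$ on which $k \ge k(y)$, because $f$ is open.

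For $\dim Y \le \dim X$, start with a finite open cover $\mathcal{U}$ of $Y$, pull it back to the finite open cover $\{f^{-1}(U) : U \in \mathcal{U}\}$ of $X$, and refine this to a finite open cover $\mathcal{V}$ of $X$ of order $\le \dim X + 1$. Since $f$ is an open surjection, $\{f(V) : V \in \mathcal{V}\}$ is then an open cover of $Y$ refining $\mathcal{U}$. The point requiring care is its order: members of $\mathcal{V}$ meeting different points of one fibre may have overlapping images, so a priori the order is bounded only by $(\dim X + 1)\cdot \sup_y k(y)$. To remove the fibre factor one arranges the refinement $\mathcal{V}$ to be \emph{fibrewise simple} — each member meets each fibre in at most one point, and two members whose images meet near a fibre already meet in $X$ — by making $\mathcal{V}$, near each fibre, subordinate to a separation of that finite fibre into disjoint open sets as above (note that finite fibres are closed, so deleting a fibre from an open set leaves an open set). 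Then each $y \in Y$ lies in at most $\dim X + 1$ of the images.

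For $\dim X \le \dim Y$, start with a finite open cover $\mathcal{V}$ of $X$; for each $y$ choose $V_1,\dots,V_{k(y)} \in \mathcal{V}$ with $p_i \in V_i$, and a neighbourhood $W_y$ of $y$ with $f^{-1}(W_y) \subseteq V_1 \cup \dots \cup V_{k(y)}$. Refine the open cover $\{W_y : y \in Y\}$ of $Y$ to a finite open cover $\mathcal{W}$ of order $\le \dim Y + 1$, and then cut each $f^{-1}(W)$ along the chosen fibre separations into open pieces, each contained in a single member of $\mathcal{V}$. The result refines $\mathcal{V}$, and its order is still $\le \dim Y + 1$: a point $x$ lies in at most $\dim Y + 1$ of the sets $f^{-1}(W)$, and for each such $W$ the piece containing $x$ is unique because the separating sets are disjoint. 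This is, for closed $f$, the Hurewicz dimension-lowering theorem specialised to a map with finite (hence zero-dimensional) fibres.

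The main obstacle, and the reason the finiteness hypothesis is used so heavily, is that $f$ is merely open, not closed: for a closed map the neighbourhood $W_y$ and the fibrewise-simple refinement above are immediate — if an open set $G$ contains the finite fibre $f^{-1}(y)$, then $f(X \setminus G)$ is closed and misses $y$, so its complement serves as $W_y$ — whereas for a genuinely open map no such $W_y$ need exist. The open case is handled by stratifying $Y$ according to the cardinality of the fibre, over each stratum of which $f$ behaves like a finite-sheeted covering, and assembling the estimates stratum by stratum while staying inside point-finite covers throughout (this is where weak paracompactness enters); this bookkeeping is the technical heart of the statement and is carried out in \cite{Pears1975DimensionTheory}. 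For the application in this paper it can be sidestepped, since the quotient map $\FMetricSpace \to \FMetricSpaceQuotient$ by a finite group acting by isometries is both open and closed, so the two constructions above apply directly.
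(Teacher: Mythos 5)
The paper does not prove this proposition; it is quoted verbatim from Pears (Proposition 9.2.16) and used as a black box, so there is no internal argument to compare you against. Judged on its own terms, your sketch takes the standard route for such results and correctly isolates where the real work lies. The lower semicontinuity of the fibre-cardinality function via disjoint separating neighbourhoods and openness of $f$ is right; the two-inequality strategy through refinement--pushforward (needing $f$ open) and refinement--pullback (needing a Hurewicz-type fibre control) is the natural one; and you are candid that both of the concrete constructions you describe implicitly require $f$ to be closed --- the fibrewise-simple arrangement in one direction and the neighbourhood $W_y$ with $f^{-1}(W_y)\subseteq V_1\cup\cdots\cup V_{k(y)}$ in the other both collapse without it --- and that the purely open case forces a stratification by fibre cardinality, which is the genuine content of Pears' proof and which you do not carry out. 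So this is an accurate outline with acknowledged gaps rather than a complete proof.

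Your closing observation is the most useful part for this paper: the only instance in which Proposition~\ref{Propn:OpenCtsMappingDimensions} is invoked here is for the quotient map $\pi\colon\FMetricSpace\to\FMetricSpaceQuotient$ by a finite isometric action, and this map is not merely open but also closed (since $\pi^{-1}(\pi(C))=\bigcup_{\groupfinelt\in\GroupFin}\groupfinelt C$ is a finite union of closed sets). Hence the open-but-not-closed pathology you flag never arises, and the two direct refinement arguments you describe already suffice for Corollary~\ref{Cor:DimensionOfQuotient}. That said, the paper deliberately quotes the general statement so it can lean on a textbook reference rather than re-derive even the easy case, which is a reasonable editorial choice; if you wanted to make the paper self-contained for the case actually used, writing out the two closed-map arguments carefully (rather than gesturing at "fibrewise simple") would be the way to do it, and you should check in particular that your cutting of $f^{-1}(W)$ along the separating open sets $O_1,\dots,O_{k(y)}$ produces pieces that still cover $f^{-1}(W)$, which requires shrinking $W$ inside $W_y$ so that $f^{-1}(W)\subseteq O_1\cup\cdots\cup O_{k(y)}$, a condition you state but do not verify propagates through the refinement of $\{W_y\}$.
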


We have stated the result in the full generality given in the book,
 but every metric space is a weakly paracompact $T_4$-space, so if you are unfamiliar with the definition of weakly paracompact or a $T_4$-space then you can take~$X,Y$ to be metric spaces.

To apply Proposition~\ref{Propn:OpenCtsMappingDimensions} to the quotient map~$\FMetricSpace \rightarrow \FMetricSpaceQuotient$ we need to know that the quotient space is a weakly paracompact $T_4$-space.
It suffices to show it is a metric space.
For any finite group~$\GroupFin$ acting by isometries on a metric space~$\FMetricSpace$ the quotient~$\FMetricSpaceQuotient$ inherits a metric via
\[
 \distance_{\FMetricSpaceQuotient} ([\fmetricspaceelt],[\fmetricspaceeltprime]) :=
   \min_{\groupfinelt,\groupfineltprime \in \GroupFin} ( \groupfinelt \fmetricspaceelt , \groupfineltprime \fmetricspaceeltprime ).
\]
Hence Proposition~\ref{Propn:OpenCtsMappingDimensions} immediately gives;

\begin{corollary}\label{Cor:DimensionOfQuotient}
 For any finite group~$\GroupFin$ and any metric space~$\FMetricSpace$, if $\GroupFin$ acts on~$\FMetricSpace$ via isometries then
 \[
  \dim(\FMetricSpace) = \dim(\FMetricSpaceQuotient).
 \]
\end{corollary}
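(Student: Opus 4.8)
The plan is to apply Proposition~\ref{Propn:OpenCtsMappingDimensions} to the quotient map $\pi \colon \FMetricSpace \to \FMetricSpaceQuotient$, so the only work is to verify its hypotheses.

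First I would check that $\FMetricSpaceQuotient$ together with $\distance_{\FMetricSpaceQuotient}$ is genuinely a metric space and that its metric topology coincides with the quotient topology. Well-definedness of $\distance_{\FMetricSpaceQuotient}$ on orbits is immediate from the fact that $\GroupFin$ acts by isometries; symmetry is clear; since $\GroupFin$ is finite each minimum is attained, which yields the triangle inequality from that of $\distance$; and $\distance_{\FMetricSpaceQuotient}([\fmetricspaceelt],[\fmetricspaceeltprime]) = 0$ forces $\groupfinelt\fmetricspaceelt = \groupfineltprime\fmetricspaceeltprime$ for some $\groupfinelt, \groupfineltprime \in \GroupFin$, hence $[\fmetricspaceelt] = [\fmetricspaceeltprime]$. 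For the topologies one observes that $\pi$ is $1$-Lipschitz (take $\groupfinelt = \groupfineltprime = 1$ in the defining minimum) and that $\pi$ sends the open $\distance$-ball of radius $r$ about $\fmetricspaceelt$ precisely onto the open $\distance_{\FMetricSpaceQuotient}$-ball of radius $r$ about $[\fmetricspaceelt]$. Consequently the $\pi$-preimage of a metric ball in $\FMetricSpaceQuotient$ is a union of metric balls in $\FMetricSpace$, and hence metric-open sets are quotient-open, while conversely any quotient-open set is the $\pi$-image of an open set in $\FMetricSpace$, hence a union of such ball-images and thus metric-open; so the two topologies agree. In particular $\FMetricSpaceQuotient$ is metrizable, hence a weakly paracompact $T_4$-space, as is $\FMetricSpace$.

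It then remains to see that $\pi$ is a continuous open surjection with finite fibres. Continuity and surjectivity are clear, $\pi$ being $1$-Lipschitz and the canonical map onto the orbit set. For openness, if $U \subseteq \FMetricSpace$ is open then $\pi^{-1}(\pi(U)) = \bigcup_{\groupfinelt \in \GroupFin} \groupfinelt U$ is open, being a union of open sets since each $\groupfinelt$ acts as a homeomorphism, so $\pi(U)$ is open in the quotient topology. Finally, for each $[\fmetricspaceelt] \in \FMetricSpaceQuotient$ the fibre $\pi^{-1}([\fmetricspaceelt]) = \GroupFin\fmetricspaceelt$ has at most $\lvert\GroupFin\rvert$ elements and is therefore finite. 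Proposition~\ref{Propn:OpenCtsMappingDimensions} now applies and gives $\dim(\FMetricSpace) = \dim(\FMetricSpaceQuotient)$. There is no substantial obstacle in this argument; the one point that needs a moment's care is confirming that the quotient pseudometric is an honest metric inducing the quotient topology, and even that is routine precisely because $\GroupFin$ is finite.
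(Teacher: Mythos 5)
Your argument is correct and follows exactly the route the paper takes: endow $\FMetricSpaceQuotient$ with the quotient metric (so it is a weakly paracompact $T_4$-space), check the projection is an open continuous surjection with finite fibres, and invoke Proposition~\ref{Propn:OpenCtsMappingDimensions}. The paper states these facts without verification, so your write-up merely supplies the routine details that the authors leave implicit.
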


We can now prove the existence of open $\GroupFin$-refinements.
\begin{proposition}\label{Propn:EquivariantRefinements}
 Let~$\FMetricSpace$ be a metric space with~$\dim(\FMetricSpace) \leq \dimn$.
 Let~$\GroupFin$ be a finite group that acts by isometries on~$\FMetricSpace$.
 Any open $\GroupFin$-cover~$\CoverArb$ of~$\FMetricSpace$ has an open $\GroupFin$\nobreakdash-\hspace{0pt}refinement~$\Refinement$ with~$\dim(\Refinement) \leq \dimn$.
\end{proposition}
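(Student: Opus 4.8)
The plan is to follow the outlined strategy: push $\CoverArb$ forward along the quotient map, refine the push-forward downstairs, pull it back, and then intersect with translates of members of $\CoverArb$ so as to recover the refinement property while keeping the result equivariant.

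Let $\pi\colon\FMetricSpace\to\FMetricSpaceQuotient$ be the quotient map; it is continuous and open, since for open $O\subseteq\FMetricSpace$ the set $\pi^{-1}(\pi(O))=\bigcup_{\groupfinelt\in\GroupFin}\groupfinelt\cdotp O$ is open. First I would push $\CoverArb$ down: $\CoverArb':=\{\pi(\coverelt):\coverelt\in\CoverArb\}$ is an open cover of $\FMetricSpaceQuotient$. By Corollary~\ref{Cor:DimensionOfQuotient}, $\dim(\FMetricSpaceQuotient)=\dim(\FMetricSpace)\leq\dimn$; moreover $\FMetricSpaceQuotient$ is a metric space, hence paracompact and normal, so Corollary~\ref{Cor:CoveringDimForParacompactNormal} produces an open refinement $\RefQuotient$ of $\CoverArb'$ with $\dim(\RefQuotient)\leq\dimn$. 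For every $\refquotelt\in\RefQuotient$ fix a set $\refquoteltsuperset\in\CoverArb$ with $\refquotelt\subseteq\pi(\refquoteltsuperset)$, and set
\[
 \Refinement:=\{\,\refeltexpression \;:\; \refquotelt\in\RefQuotient,\ \groupfineltother\in\GroupFin\,\}.
\]
I would then check that $\Refinement$ is an open $\GroupFin$-refinement of $\CoverArb$. Each member is open. The family is $\GroupFin$-equivariant, because $\pi^{-1}(\refquotelt)$ is $\GroupFin$-invariant, so $\groupfinelt\cdotp(\refeltexpression)=\pi^{-1}(\refquotelt)\cap(\groupfinelt\groupfineltother)\cdotp\refquoteltsuperset\in\Refinement$. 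It refines $\CoverArb$, since $\refeltexpression\subseteq\groupfineltother\cdotp\refquoteltsuperset\in\CoverArb$. It covers $\FMetricSpace$: for fixed $\refquotelt$ the sets $\refeltexpression$ ($\groupfineltother\in\GroupFin$) are pairwise disjoint or equal — they are intersections of $\pi^{-1}(\refquotelt)$ with the translates $\groupfineltother\cdotp\refquoteltsuperset$ of $\refquoteltsuperset\in\CoverArb$, and $\CoverArb$ is a $\GroupFin$-cover — and their union is $\pi^{-1}(\refquotelt)\cap\pi^{-1}(\pi(\refquoteltsuperset))=\pi^{-1}(\refquotelt)$, while the $\pi^{-1}(\refquotelt)$, $\refquotelt\in\RefQuotient$, cover $\FMetricSpace$. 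Finally, if $\groupfinelt\cdotp\refelt\cap\refelt\neq\emptyset$ for $\refelt=\refeltexpression$, then $(\groupfinelt\groupfineltother)\cdotp\refquoteltsuperset\cap\groupfineltother\cdotp\refquoteltsuperset\neq\emptyset$, so these two translates of $\refquoteltsuperset\in\CoverArb$ coincide, whence $\groupfinelt\cdotp\refelt=\refelt$; thus $\Refinement$ is a $\GroupFin$-cover.

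The step I expect to be the main obstacle is the bound $\dim(\Refinement)\leq\dimn$, since $\Refinement$ contains \emph{all} translates $\groupfineltother\cdotp\refquoteltsuperset$ and one must check this does not raise multiplicities. The key observation is that $\pi(\refeltexpression)=\refquotelt$ for all $\refquotelt,\groupfineltother$: every point of $\refquotelt\subseteq\pi(\refquoteltsuperset)$ has a preimage in $\refquoteltsuperset$, hence, after translating by $\groupfineltother$, a preimage in $\groupfineltother\cdotp\refquoteltsuperset$. Thus each member of $\Refinement$ "remembers" the unique $\refquotelt\in\RefQuotient$ it was built from — namely its image under $\pi$ — and, as observed above, the members built from a fixed $\refquotelt$ partition $\pi^{-1}(\refquotelt)$. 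Consequently a point $\fmetricspaceelt\in\FMetricSpace$ lies in exactly one member of $\Refinement$ built from $\refquotelt$ when $\pi(\fmetricspaceelt)\in\refquotelt$ and in none otherwise, so the number of members of $\Refinement$ containing $\fmetricspaceelt$ equals the number of $\refquotelt\in\RefQuotient$ containing $\pi(\fmetricspaceelt)$, which is at most $\dim(\RefQuotient)+1\leq\dimn+1$. Therefore $\dim(\Refinement)\leq\dimn$, which completes the argument; everything except this last count is a direct check against the definitions.
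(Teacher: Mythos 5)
Your proof is correct and follows essentially the same route as the paper: push the cover to the quotient, refine there, pull back, and intersect with translates of the chosen $\refquoteltsuperset$. The dimension bound in the paper is phrased as an explicit injection $\pi_{\fmetricspaceelt}\colon\{\refelt\in\Refinement:\fmetricspaceelt\in\refelt\}\to\{\refquotelt\in\RefQuotient:\pi(\fmetricspaceelt)\in\refquotelt\}$, whereas you observe that the members built from a fixed $\refquotelt$ partition $\pi^{-1}(\refquotelt)$; these are the same argument.
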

\begin{proof}
 Let~$\pi \colon \FMetricSpace \rightarrow \FMetricSpaceQuotient$ be the projection map, which is an open, continuous surjection.
 Then the collection~$\pi(\CoverArb) := \{ \pi(\coverelt) \, | \, \coverelt \in \CoverArb \}$ is an open cover of~$\FMetricSpaceQuotient$.
 By corollaries~\ref{Cor:CoveringDimForParacompactNormal} and~\ref{Cor:DimensionOfQuotient} we know there is a refinement~$\RefQuotient$ of~$\pi(\CoverArb)$ with~$\dim(\RefQuotient) \leq \dimn$.

 The pull-back~$\pi^{-1}(\RefQuotient)$ is an equivariant cover of~$\FMetricSpace$ but it is not necessarily a refinement of~$\CoverArb$.
 Taking intersections with elements of~$\CoverArb$ would give a refinement of~$\CoverArb$ but we need to be careful about the dimension and to not lose the equivariance.

 So for every~$\refquotelt \in \RefQuotient$ fix an element~$\refquoteltsuperset \in \CoverArb$ such that~$\refquotelt \subseteq \pi(\refquoteltsuperset)$.
 Now define
 \[
  \Refinement = \{ \refeltexpression \, | \, \refquotelt \in \RefQuotient, \, \groupfineltother \in \GroupFin \}.
 \]
 Every element of~$\Refinement$ is open since it is the intersection of two open sets.
 The collection~$\Refinement$ covers~$\FMetricSpace$ since for any point~$\fmetricspaceelt \in \FMetricSpace$ there is some~$\refquotelt \in \RefQuotient$ with~$\pi(\fmetricspaceelt) \in \refquotelt \subseteq \pi(\refquoteltsuperset)$ and then~$\fmetricspaceelt \in \pi^{-1}(\refquotelt) \subseteq \pi^{-1}(\pi(\refquoteltsuperset)) = \GroupFin \cdotp \refquoteltsuperset$.

 To show it is equivariant, consider~$\refelt = \refeltexpression$.
 For any~$\groupfinelt \in \GroupFin$ we have
 \begin{align*}
  \groupfinelt \cdotp \refelt  &=  \groupfinelt \cdotp ( \refeltexpression )
      \\      &=  \groupfinelt \cdotp \pi^{-1}(\refquotelt) \cap (\groupfinelt \groupfineltother) \cdotp \refquoteltsuperset
      \\      &=  \pi^{-1}(\refquotelt) \cap (\groupfinelt\groupfineltother) \cdotp \refquoteltsuperset \in \Refinement.
 \end{align*}
 There is one condition that still needs to be proven to know it is an open $\GroupFin$-refinement, namely that translates are disjoint (or unmoved).

 Again consider~$\refelt = \refeltexpression$.
 The set~$\pi^{1} (\refquotelt)$ is invariant under all of~$\GroupFin$ so we only need to look at $\groupfineltother \cdotp \refquoteltsuperset$.
 If~$\groupfinelt \cdotp \refelt \cap \refelt \neq \emptyset$ for some~$\groupfinelt \in \GroupFin$ then~$\groupfinelt \cdotp (\groupfineltother \cdotp \refquoteltsuperset) \cap \groupfineltother \cdotp \refquoteltsuperset \neq \emptyset$ and thus~$\groupfinelt \cdotp (\groupfineltother \cdotp \refquoteltsuperset) = \groupfineltother \cdotp \refquoteltsuperset$, since $\groupfineltother \cdotp \refquoteltsuperset \in \CoverArb$, which is an $\GroupFin$-cover.

 It remains to prove that~$\dim(\Refinement) \leq \dimn$.
 We do this by proving that $\dim(\Refinement) \leq \dim(\RefQuotient)$.
 Fix any~$\fmetricspaceelt \in \FMetricSpace$.
 The idea is to show that the projection~$\pi$ induces an injection
 \[
  \pi_{\fmetricspaceelt} \colon \{ \refelt \in \Refinement \, | \, \fmetricspaceelt \in \refelt \} \rightarrow \{ \refquotelt \in \RefQuotient \, | \, \pi(\fmetricspaceelt) \in \refquotelt \}, \refelt \mapsto \pi(\refelt).
 \]
 First we need to show that such a $\pi_{\fmetricspaceelt}$ is well-defined, i.e. that any element $\refelt = \refeltexpression$ is mapped to an element of~$\RefQuotient$.
 The projection is surjective and by definition~$\refquotelt \subseteq \pi(\refquoteltsuperset) = \pi(\groupfineltother \cdotp \refquoteltsuperset)$ so~$\pi(\refelt) = \refquotelt \in \RefQuotient$.
 Hence the map~$\pi_{\fmetricspaceelt} \colon \refelt \mapsto \pi(\refelt)$ is well-defined.

 It remains to prove that it is injective.
 Suppose~$\pi_{\fmetricspaceelt}(\refelt) = \pi_{\fmetricspaceelt}(\refelt')$.
 Then write~$\refelt = \refeltexpression$ and~$\refeltprime = \refeltprimeexpression$.
 Since~$\pi(\refelt) = \refquotelt$ and similarly for~$\refeltprime$, we know~$\refquotelt = \refquoteltprime$.
 By assumption~$\fmetricspaceelt \in \groupfineltother \cdotp \refquoteltsuperset \cap \groupfineltotherprime \cdotp \refquoteltsuperset$ but $\CoverArb$ is an $\GroupFin$-cover so we conclude~$\groupfineltother \cdotp \refquoteltsuperset = \groupfineltotherprime \cdotp \refquoteltsuperset$ and thus~$\refelt = \refeltprime$.
 Therefore the map is injective.

 This shows that the dimension of~$\Refinement$ is bounded by the dimension of~$\RefQuotient$.
 In particular~$\dim(\Refinement) \leq \dimn$.
\end{proof}

The refinement constructed in this proof is not canonical because it will depend on the choices of the~$\refquoteltsuperset$ for the~$\refquotelt \in \RefQuotient$.

\section{Generalising the General Position Arguments}

The motivation for looking at these $\GroupFin$-covers was to remove the condition that there is a bound on the order of finite subgroups in~\cite{BartelsLueck2010CAT0Flow}*{Convention~5.1}.
The convention we will use here given below.
\begin{convention}\label{Conventions}
 We make the following assumptions:
 \begin{itemize}
  \item $G$ is a discrete group,
  \item $\Family$ is a family of subgroups of~$G$,
  \item $(\FS,\distanceFS)$ is a metric space with an action of~$G$ via isometries,
  \item $\flow \colon \FS \times \RR \rightarrow \FS$ is a $G$-equivariant flow,
  \item $\FSnonstationary$ is locally connected, locally compact and finite dimensional,
  \item the action of $G$ on~$\FSnonstationary$ is proper
  \item the flow is uniformly continuous, i.e. for all~$\alpha >0$ and all~$\epsilon >0$ there exists some~$\delta >0$ such that for any~$z,z' \in \FS$ and any~$\tau \in [-\alpha,\alpha]$
  \[
   \distanceFS(z,z') \leq \delta
   \Rightarrow
   \distanceFS \big( \flow_{\tau}(z) , \flow_{\tau}(z') \big) \leq \epsilon.
  \]
 \end{itemize}

 Note that~$\FS^{\RR}$ is $G$-invariant since the flow is $G$-equivariant, so there is a well-defined action of~$G$ on the subspace~$\FSnonstationary$.
\end{convention}
 
Our assumptions here differ from \cite{BartelsLueck2010CAT0Flow}*{Convention~5.1} in a couple of ways. Firstly, we do not assume there is a bound on the order of finite subgroups. Secondly, we do not ask for the $G$-action to be proper on all of~$\FS$, only away from the stationary points. Finally, we do not require all of~$\FS$ to be locally compact, only~$\FSnonstationary$.
 
This is because we are working towards a more general version of \cite{BartelsLueck2010CAT0Flow}*{Theorem~5.6}.
Before we can state the theorem we need to explain some notation.
The \emph{$G$-period} of an element $z \in \FS$ is
\[
 \GPer(z) := \inf \lbrace \tau \in [0,\infty] \colon \exists g \in G \text{ with } \flow_{\tau} (z) = g z \rbrace
\]
where the infimum of the empty set is defined to be~$\infty$.
Then for any~$\gamma \geq 0$ we can split the flow space up into two parts;
\[
 \FSshort := \lbrace z \in \FS \, \vert \, \GPer(z) \leq \gamma \rbrace,
\]
\[
 \FSlong := \lbrace z \in \FS \, \vert \, \GPer(z) > \gamma \rbrace.
\]
We also need the concept of an $\Family$-subset and an $\Family$-cover.
An \emph{$\Family$-subset of~$\FS$} is a subset~$U \subseteq \FS$ such that
\begin{itemize}
 \item $\forall g \in G, \, \quad g \cdotp U \cap U \neq \emptyset \Rightarrow g \cdotp U = U$,
 \item $G_U \in \Family$.
\end{itemize}
Then an \emph{$\Family$-collection of subsets of~$\FS$} is a $G$-equivariant collection~$\mathcal{U}$ of $\Family$\nobreakdash-\hspace{0pt}subsets of~$\FS$.

The following theorem is a stronger version of \cite{BartelsLueck2010CAT0Flow}*{Theorem~5.6} and tells us about the existence of certain `nice' covers of cocompact subspaces of~$\FSlong$.

\begin{theorem}\label{Thm5.6}
 Under the assumptions of Convention~\ref{Conventions}, there exists a number $M \in \NN$ such that for any~$\alpha>0$ there is a~$\gamma>0$  such that for any compact~${K \subseteq \FSlong}$ there is a collection~$\IndCover$ of open $\Vcyc$-subsets of~$\FS$ satisfying
 \begin{enumerate}
  \item $\IndCover$ is $G$-invariant, i.e. for all~$g \in G$ and all~$\indcoverelt \in \IndCover$, the set~$g\indcoverelt$ is also in~$\IndCover$,
  \item $\dim \IndCover \leq N$,
  \item $G \backslash \IndCover$ is finite,
  \item for any~$z \in G K$ there is some set~$\indcoverelt \in \IndCover$ with~$\flow_{[-\alpha,\alpha]}(z) \subseteq \indcoverelt$.
 \end{enumerate}
\end{theorem}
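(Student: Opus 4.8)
The plan is to reprove \cite{BartelsLueck2010CAT0Flow}*{Theorem~5.6} by running its proof essentially unchanged and replacing the one step that uses a bound on the orders of the finite subgroups of~$G$ by an application of Proposition~\ref{Propn:EquivariantRefinements} (together with Corollary~\ref{Cor:DimensionOfQuotient}). The first point is that the whole construction takes place inside~$\FSnonstationary$, which is exactly where our assumptions coincide with those of \cite{BartelsLueck2010CAT0Flow}*{Convention~5.1}: a stationary point~$z$ satisfies~$\flow_\tau(z)=z$ for all~$\tau$, so~$\GPer(z)=0$ and hence~$\FSlong\subseteq\FSnonstationary$; the set~$\FS^{\RR}=\bigcap_{\tau\in\RR}\{z:\flow_\tau(z)=z\}$ is closed, so~$\FSnonstationary$ is open, and it is flow-invariant, so~$\flow_{[-\alpha,\alpha]}(z)\subseteq\FSnonstationary$ whenever~$z\in\FSnonstationary$. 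Hence every set we construct lies in the open subspace~$\FSnonstationary$, on which~$G$ acts properly and which is locally compact, locally connected, finite-dimensional and metrizable; in particular every point of~$\FSlong$ has finite stabilizer, and the weakenings of the hypotheses (no bound~$k_G$, and properness and local compactness only away from~$\FS^{\RR}$) cause no trouble. The constant~$N$ --- written~$M$ in the statement --- will depend only on~$\dim(\FSnonstationary)$.

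\emph{Flow boxes.} Next I would import, unchanged, the flow-box machinery of \cite{BartelsLueck2010CAT0Flow}. Given~$\alpha$ one chooses~$\gamma$ large enough in terms of~$\alpha$ and of the uniform continuity of~$\flow$ so that for every~$z\in\FSlong$ there are a transversal~$S_z\ni z$ inside~$\FSnonstationary$ and an interval~$J$ of length comparable to~$\gamma$ such that~$\flow$ restricts to an open embedding~$S_z\times J\hookrightarrow\FSnonstationary$ whose image~$B_z$ is an open $\Vcyc$-subset of~$\FS$ with~$\flow_{[-\alpha,\alpha]}(z')\subseteq B_z$ for all~$z'$ near~$z$, with finite stabilizer~$F_z:=G_{B_z}$, and such that~$F_z$ acts by isometries on~$S_z$; here the condition~$\GPer>\gamma$ on~$\FSlong$ is what makes the translates of~$B_z$ disjoint or equal and~$F_z$ finite, the box being short in the flow direction compared with the period. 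Since~$S_z$ is (homeomorphic to) a subspace of~$\FSnonstationary$, it carries a metric in which it is a metric space of dimension~$\leq\dim(\FSnonstationary)$. As~$K$ is compact, finitely many~$G$-orbits of such boxes, with representatives~$B_{z_1},\dots,B_{z_m}$, cover a~$G$-invariant open neighbourhood of~$GK$; the~$G$-equivariant collection~$\{g B_{z_i}\mid g\in G,\ 1\leq i\leq m\}$ already consists of open $\Vcyc$-subsets and satisfies conditions~(1), (3) and~(4), so that only the dimension bound~(2) remains in doubt.

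\emph{The new ingredient.} To force~(2) one must refine the collection, and this is the only place where \cite{BartelsLueck2010CAT0Flow} uses the bound~$k_G$: it carries out an \emph{equivariant} general-position argument whose dimension estimate is controlled through~$k_G$. Instead, for each~$i$ I would cover the transversal~$S_{z_i}$ by a finite open collection of dimension~$\leq\dim(S_{z_i})$ (Corollary~\ref{Cor:CoveringDimForParacompactNormal}), enlarge it to an~$F_{z_i}$-equivariant open cover by passing to~$F_{z_i}$-orbits, and apply Proposition~\ref{Propn:EquivariantRefinements} to the finite group~$F_{z_i}$ acting isometrically on the metric space~$S_{z_i}$: this gives an open~$F_{z_i}$-cover~$\mathcal{W}_i$ refining it with~$\dim(\mathcal{W}_i)\leq\dim(S_{z_i})\leq\dim(\FSnonstationary)$ and no dependence on~$|F_{z_i}|$. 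Flowing the members of~$\mathcal{W}_i$ out along the long interval~$J$, taking~$G$-orbits, and assembling over~$i=1,\dots,m$ (with the care discussed below) yields the collection~$\IndCover$. Each of its members is again an open $\Vcyc$-subset: it sits inside some translate of~$B_{z_i}$, so by the period condition its stabilizer lies in a conjugate of the finite group~$F_{z_i}$, and it is disjoint from or equal to each of its translates because~$\mathcal{W}_i$ is an~$F_{z_i}$-cover; the collection~$\IndCover$ is~$G$-invariant with~$G\backslash\IndCover$ finite; and condition~(4) is kept because the refinement is purely transversal, leaving the interval~$J$ intact.

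\emph{The main obstacle.} The delicate part is making the refinement interact correctly with the flow and, above all, obtaining a dimension bound that is \emph{global}. Condition~(4) forbids refining the flow boxes freely, so the refinement must take place entirely in the transversal directions; and while each individual~$\mathcal{W}_i$ has dimension~$\leq\dim(\FSnonstationary)$, the union over the~$m$ orbit types must still be arranged --- by putting the various transversal covers into general position against one another --- to have dimension~$\leq N$ with a bound independent of~$m$, and hence of~$K$, all while keeping the members $\Vcyc$-subsets. This bookkeeping is exactly what the general-position argument of \cite{BartelsLueck2010CAT0Flow} accomplishes, and it goes through here verbatim because its only quantitative input is a bound on the dimension of an equivariant refinement of a cover of a transversal that is uniform in the order of the acting finite group --- which is precisely what Proposition~\ref{Propn:EquivariantRefinements} supplies. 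The remaining work is then the routine but somewhat lengthy check that, after this substitution, every other step of \cite{BartelsLueck2010CAT0Flow}*{Section~5} carries over unchanged, which it does since each of those steps refers only to~$\FSnonstationary$, where our hypotheses and theirs agree.
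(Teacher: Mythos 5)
Your overall plan is the same as the paper's: rerun the proof of
\cite{BartelsLueck2010CAT0Flow}*{Theorem~5.6} (equivalently
\cite{BartelsLueckReich2008ECHG}*{Proposition~4.1}), note that everything
happens inside~$\FSnonstationary$ so the weakened hypotheses cause no trouble,
and replace the single step that needs a bound on~$|F|$ by
Proposition~\ref{Propn:EquivariantRefinements}.  That is indeed the right
strategy, and it is the one the paper takes.

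However, one concrete step of your sketch breaks down, and it is exactly the
step where the new proposition is invoked.  You start from an arbitrary finite
open cover of~$S_{z_i}$ of dimension~$\leq\dim(S_{z_i})$, ``enlarge it to an
$F_{z_i}$-equivariant open cover by passing to $F_{z_i}$-orbits'', and then
feed that to Proposition~\ref{Propn:EquivariantRefinements}.  But that
proposition takes as input an \emph{$F$-cover} in the sense of
Definition~\ref{Defn:EquivariantCover}, i.e.\ a cover for which every
translate of every set is either disjoint from it or equal to it; this
hypothesis is used essentially in its proof (both for the ``translates
disjoint or unmoved'' condition and for the injectivity of~$\pi_z$).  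Taking
$F$-orbits of an arbitrary cover produces an equivariant cover, but
generically not an $F$-cover: a small ball may meet its translate without
coinciding with it.  So the proposition cannot be applied to the cover you
build, and your dimension bound for~$\mathcal W_i$ is not justified as
written.  The paper avoids this by never starting from an arbitrary cover:
in Lemma~\ref{Lemma:CoverOfZ} a genuine $F$-cover is constructed
point-by-point (choosing the neighbourhoods~$W_z$ so that their orbits are
disjoint and $F$-equivariant), and only then is
Proposition~\ref{Propn:EquivariantRefinements} applied.  More broadly, the
paper does not carry out the replacement directly in the flow-box arrangement
but isolates it inside the two self-contained general-position statements
\cite{BartelsLueckReich2008ECHG}*{Propositions~3.2 and~3.3}, strengthened here
as Propositions~\ref{Propn:Stronger32} and~\ref{Propn:Stronger33}; pinning the
change to those two statements is what makes it clean to assert that the rest
of the long proof of \cite{BartelsLueckReich2008ECHG}*{Proposition~4.1} goes
through verbatim.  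So: same route, but you need to produce an honest
$F$-cover before invoking Proposition~\ref{Propn:EquivariantRefinements}, and
it is tidier to do the substitution at the level of the two auxiliary
propositions rather than at the level of the transversals of individual flow
boxes.
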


The proof of this theorem is as for the proof of \cite{BartelsLueck2010CAT0Flow}*{Theorem~5.6}.
We are working away from the stationary points so it does not matter that we weakened a couple of assumptions from \cite{BartelsLueck2010CAT0Flow}*{Convention~5.1}, where we allow things to be a bit more wild on~$\FS^{\RR}$.
However, we do need to prove that the bound on the order of finite subgroups is not necessary.

\cite{BartelsLueck2010CAT0Flow}*{Theorem~5.6} is a more precise formulation of \cite{BartelsLueckReich2008ECHG}*{Proposition~4.1} so it suffices to prove that the bound on the order of finite subgroups was unnecessary for \cite{BartelsLueckReich2008ECHG}*{Proposition~4.1}.

The proof of \cite{BartelsLueckReich2008ECHG}*{Proposition~4.1} is long and technical, but the only time the bound on the order of finite subgroups is used is in the general position arguments in \cite{BartelsLueckReich2008ECHG}*{Section~3}.
In particular, we need to strengthen \cite{BartelsLueckReich2008ECHG}*{Propositions~3.2 and~3.3} so that they do not depend on the order of the finite subgroup.
The proof of our stronger versions (Propositions~\ref{Propn:Stronger32} and~\ref{Propn:Stronger33} below) will follow their proof but it will be made clear where we need to be more careful.

We first strengthen~\cite{BartelsLueckReich2008ECHG}*{Proposition~3.3} so that it does need depend on the order of the finite subgroup.
A quick couple of remarks about this proposition.
Firstly, we give a very slightly weaker property~$(\ref{Item:IndCoverSelfIntersections})$ than in~\cite{BartelsLueckReich2008ECHG}*{Proposition 3.3}, namely our bound is~$2^{j+1}-1$ instead of~$2^{j+1}-2$.
This does not cause any problems since all that is used is that the bound is~$< 2^{j+1}$.

Secondly, we do not require there to be a locally connected subspace~$Y$ that contains all the~$\overline{\collectionelt}$.
The existence of the subspace was only an extra condition and did not appear in the conclusions of \cite{BartelsLueckReich2008ECHG}*{Proposition~3.3} and we are able to remove it from our assumptions.
However, the assumption in Convention~\ref{Conventions} that $\FSnonstationary$ is locally connected cannot be removed because it is also used elsewhere in the proof of \cite{BartelsLueckReich2008ECHG}*{Proposition~4.1}.
Namely, it is used to make the central slices of the boxes $S_{C_{\lambda}}$ connected and this fact is used in the proof of \cite{BartelsLueckReich2008ECHG}*{Lemma~4.20}.
It may be possible to avoid this usage so we give a more general version of \cite{BartelsLueckReich2008ECHG}*{Proposition~3.3} than is strictly necessary for us to deduce Theorem~\ref{Thm5.6}.

\begin{proposition}\label{Propn:Stronger33}
 Let~$\FMetricSpace$ be a compact metric space with covering dimension~$\dimn$.
 Let~$\GroupFin$ be a finite group that acts on~$\FMetricSpace$ by isometries.
 Let~$\BoundaryBound \in \NN$ be a fixed integer.
 Let~$\Collection$ be a finite $\GroupFin$-equivariant collection of open subsets of~$\FMetricSpace$ such that
 \[
  \forall \Collection_0 \subseteq \Collection, \quad | \Collection_0 | > \BoundaryBound \Rightarrow \bigcap_{\collectionelt \in \Collection_0} \partial\collectionelt = \emptyset.
 \]
  Then for any~$\DiameterBound >0$ there are finite collections~$\IndCover^0, \ldots , \IndCover^{\dimn}$ of open subsets of~$\FMetricSpace$ satisfying for all~$j=0,\ldots,n$
 \begin{enumerate}[(i)]
  \item\label{Item:IndCoverPartition}
    $\IndCover := \IndCover^0 \cup \ldots \cup \IndCover^{\dimn}$ is an open cover of~$\FMetricSpace$;
  \item\label{Item:IndCoverBounded}
    $\forall \indcoverelt \in \IndCover, \quad \diam(\indcoverelt) < \DiameterBound$;
  \item\label{Item:IndCoverIntersectingU}
    $\forall \indcoverelt \in \IndCover, \quad | \{ \collectionelt \in \Collection \, | \, \collectionelt \cap \indcoverelt \neq \emptyset \} | \leq \BoundaryBound$;
  \item\label{Item:IndCoverSelfIntersections}
    $\forall \indcoverelt_0 \in \IndCover^j, \quad | \{ \indcoverelt \in \IndCover^0 \cup \ldots \IndCover^j \, | \, \indcoverelt_0 \cap \indcoverelt \neq \emptyset \} | \leq 2^{j+1} - 1$;
  \item\label{Item:IndCoverDisjointClosures}
    $\forall \indcoverelt,\indcovereltprime \in \IndCover^j, \quad
             \indcoverelt \neq \indcovereltprime \, \Rightarrow \, \indcovereltclosure \cap \indcovereltprimeclosure = \emptyset$;
  \item\label{Item:IndCoverInvariance}
    $\forall \indcoverelt \in \IndCover^j, \, \forall \groupfinelt \in \GroupFin, \quad \groupfinelt \indcoverelt \in \IndCover^j$;
  \item\label{Item:IndCoverIntClosure}
    $\indcoverelt \in \IndCover, \quad \indcovereltclosureinterior = \indcoverelt$.
 \end{enumerate}
\end{proposition}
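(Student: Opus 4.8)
The plan is to follow the inductive construction of \cite{BartelsLueckReich2008ECHG}*{Proposition~3.3}, building the collections $\IndCover^0, \ldots, \IndCover^{\dimn}$ one dimension at a time, but at each stage using Proposition~\ref{Propn:EquivariantRefinements} in place of the ordinary dimension-theoretic refinement, so that equivariance is retained without ever referring to the order of $\GroupFin$. The induction hypothesis after step $j$ will be that $\IndCover^0 \cup \ldots \cup \IndCover^j$ is an open $\GroupFin$-equivariant family of sets of diameter $< \DiameterBound$, each meeting at most $\BoundaryBound$ members of $\Collection$, with the self-intersection bound~$(\ref{Item:IndCoverSelfIntersections})$, disjoint closures within each~$\IndCover^i$, $\GroupFin$-invariance of each~$\IndCover^i$, and that the union covers all of~$\FMetricSpace$ except a closed set~$A_j$ of covering dimension~$\leq \dimn - j - 1$; for $j = \dimn$ the exceptional set is empty and we are done. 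Property~$(\ref{Item:IndCoverIntClosure})$ will be arranged at the end by passing from each~$\indcoverelt$ to~$\indcovereltclosureinterior$, noting that this operation commutes with the $\GroupFin$-action and does not enlarge diameters, destroy openness of the cover, or increase intersection numbers.

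First I would set up the base case: the boundaries $\partial\collectionelt$ for $\collectionelt \in \Collection$ form a finite $\GroupFin$-invariant family whose $(\BoundaryBound+1)$-fold intersections are empty, so their union has covering dimension $\leq \dimn - 1$ roughly speaking; one covers a neighbourhood of this union by small open sets meeting few~$\collectionelt$, takes an equivariant refinement via Proposition~\ref{Propn:EquivariantRefinements}, and then shrinks to get disjoint closures — this produces~$\IndCover^0$ and leaves uncovered a closed subset of the interior of the ``cells'' cut out by~$\Collection$, which lives in a space of dimension~$\leq \dimn - 1$. For the inductive step, I work inside the still-uncovered closed set~$A_{j-1}$ (dimension~$\leq \dimn - j$): cover it by small open sets each meeting at most~$\BoundaryBound$ elements of~$\Collection$ and, crucially, each contained in a single member of~$\IndCover^0 \cup \ldots \cup \IndCover^{j-1}$ or disjoint from many of them so as to preserve~$(\ref{Item:IndCoverSelfIntersections})$; apply Proposition~\ref{Propn:EquivariantRefinements} to this cover of~$A_{j-1}$ (which is itself a compact metric $\GroupFin$-space of dimension~$\leq \dimn - j$, using Corollary~\ref{Cor:DimensionOfQuotient}-type reasoning for the quotient) to get a $\GroupFin$-refinement of dimension~$\leq \dimn - j$, discard all but a suitable ``first coordinate'' of a partition-of-unity-style nerve argument to peel off one dimension, take the resulting pieces as~$\IndCover^j$ after shrinking for disjoint closures, and check that the newly uncovered set~$A_j$ has dimension dropped by one more.

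The bookkeeping for~$(\ref{Item:IndCoverSelfIntersections})$ is where I expect the real work: when a new set~$\indcoverelt_0 \in \IndCover^j$ is introduced it may meet sets from all of~$\IndCover^0, \ldots, \IndCover^{j}$, and one must show the total is~$< 2^{j+1}$. The standard trick (as in \cite{BartelsLueckReich2008ECHG}) is that~$\indcoverelt_0$ was produced by refining a cover of~$A_{j-1}$ by sets so small that each is contained in a member of~$\IndCover^{i}$ or avoids its closure, for each~$i < j$; inducting, a set in~$\IndCover^j$ meets at most one set of~$\IndCover^i$ ``through inclusion'' plus whatever that set already met, giving the doubling recursion~$a_{j} \leq 2 a_{j-1} + 1$ with~$a_0 = 1$, hence~$a_j \leq 2^{j+1} - 1$. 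Carrying the equivariance through this shrinking-and-refining loop is routine once Proposition~\ref{Propn:EquivariantRefinements} is available — this is precisely the point at which the original argument invoked the finite-subgroup bound and where our replacement removes it. The only genuinely new verification beyond \cite{BartelsLueckReich2008ECHG} is that every shrinking step (taking $\overline{\indcoverelt}^{\circ}$, or passing to a smaller open set with the same behaviour relative to~$\Collection$ and to earlier~$\IndCover^i$) can be done $\GroupFin$-equivariantly, which follows because the $\GroupFin$-action is by homeomorphisms and commutes with closure and interior.
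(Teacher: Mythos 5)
Your strategy diverges substantially from the paper's. The paper does not build the families $\IndCover^0,\ldots,\IndCover^{\dimn}$ inductively in~$j$. Instead it constructs a single fine open $\GroupFin$-cover~$\Refinement$ of~$\FMetricSpace$ with $\dim(\Refinement)\le\dimn$ (Lemma~\ref{Lemma:CoverOfZ}, which is where Proposition~\ref{Propn:EquivariantRefinements} is invoked, once), then takes the nerve~$\Nerve(\Refinement)$ and pulls back the canonical barycentric-star cover $\SimpCover^0\cup\ldots\cup\SimpCover^{\dimn}$ of~$|\Nerve(\Refinement)|$ (Lemma~\ref{Lemma:CanonCoverOfSimpCx}) along the equivariant nerve map $f_{\Refinement}$. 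The bound $2^{j+1}-1$ then falls out for free as the number of faces of a $j$-simplex; the disjoint-closure property~(\ref{Item:IndCoverDisjointClosures}) and the interior-of-closure property~(\ref{Item:IndCoverIntClosure}) are arranged at the end by a single uniform shrinking $\indcoverelt\mapsto(\overline{\indcoverelt^{-1/m}})^{\circ}$. There is no residual set~$A_j$ of decreasing dimension anywhere in the argument.

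Your inductive dimension-reduction scheme has a genuine gap at its core: nowhere do you establish that the uncovered set~$A_j$ actually has covering dimension $\le \dimn-j-1$. For~$j=0$ you wave at ``the interior of the cells cut out by~$\Collection$'' and say its complement lives in dimension $\le\dimn-1$ ``roughly speaking'', but the cells are full-dimensional and a family with pairwise disjoint closures covering a neighbourhood of the boundaries leaves an $n$-dimensional remainder in general; getting a genuine dimensional drop needs an argument, which is exactly what the nerve is doing for the paper. Relatedly, the appeal to Corollary~\ref{Cor:DimensionOfQuotient} to bound $\dim A_{j-1}$ is a mis-citation: that corollary compares $\dim(\FMetricSpace)$ with $\dim(\GroupFin\backslash\FMetricSpace)$ and says nothing about subspaces of~$\FMetricSpace$. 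Finally, when you reach for ``a partition-of-unity-style nerve argument to peel off one dimension'', you have essentially reinvented the paper's mechanism, but buried inside an induction where its effect on~$A_j$ would still have to be tracked at every stage together with the recursion for~(\ref{Item:IndCoverSelfIntersections}); the recursion $a_j\le 2a_{j-1}+1$ you state is not obviously what your construction produces (a small $\indcoverelt_0\in\IndCover^j$ covering a point of~$A_{j-1}$ cannot be \emph{contained} in any earlier $\indcoverelt\in\IndCover^i$, so the ``through inclusion'' bookkeeping does not match the situation). The missing idea, in short, is Lemma~\ref{Lemma:CanonCoverOfSimpCx}: once one notices that a single barycentric subdivision hands you all $\dimn+1$ layers simultaneously with the right intersection combinatorics, the induction on~$j$ and its delicate dimension accounting become unnecessary.
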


The idea behind the proof is to show that a simplicial complex has a canonical cover satisfying properties~$(\ref{Item:IndCoverPartition})$ and~$(\ref{Item:IndCoverSelfIntersections})$, as well as a weaker version of~$(\ref{Item:IndCoverDisjointClosures})$.
Then we construct an $\GroupFin$-cover~$\Refinement$ of~$\FMetricSpace$ satisfying properties~$(\ref{Item:IndCoverBounded})$,~$(\ref{Item:IndCoverIntersectingU})$, and~$(\ref{Item:IndCoverInvariance})$, and pull-back the canonical cover of the nerve~$\Nerve(\Refinement)$  of~$\Refinement$ to~$\FMetricSpace$.
Finally we shrink this cover so that properties~$(\ref{Item:IndCoverDisjointClosures})$ and~$(\ref{Item:IndCoverIntClosure})$ are fulfilled.

Before that, some notation.
If~$\SimpCx$ is a simplicial complex then let~$|\SimpCx|$ be the geometric realisation of~$\SimpCx$.
For a simplex~$\sigma$ of~$\SimpCx$, let~$\simplexint(\sigma)$ be the interior of~$\sigma$ in~$|\SimpCx|$.
Let~$\openstar(\sigma)$ be the open star of~$\sigma$ in~$|\SimpCx|$, i.e. the union of the interiors of all the simplices of~$\SimpCx$ that contain~$\sigma$.
Let~$\SimpCxBarycentric$ be the barycentric subdivision of~$\SimpCx$ and let~$\barycentre{\sigma}$ be the barycentre of~$\sigma$ (so~$\barycentre{\sigma}$ is a vertex of~$\SimpCxBarycentric$.)
Note that~$|\SimpCx| = |\SimpCxBarycentric|$.

Now we begin by constructing a canonical cover of an arbitrary simplicial complex of dimension~$\leq \dimn$.

\begin{lemma}\label{Lemma:CanonCoverOfSimpCx}
 Let~$\SimpCx$ be a simplicial complex of dimension~$\dimn$.
 There are finite collections~$\SimpCover^0, \ldots, \SimpCover^{\dimn}$ of subsets of~$|\SimpCx|$ such that for all~$j = 0,\ldots,n$
 \begin{itemize}
  \item $\SimpCover := \SimpCover^0 \cup \ldots \cup \SimpCover^{\dimn}$ is an open cover of~$|\SimpCx|$;
  \item $\forall \simpcoverelt_0 \in \SimpCover^j, \quad | \{ \simpcoverelt \in \SimpCover^0 \cup \ldots \SimpCover^j \, | \, \simpcoverelt_0 \cap \simpcoverelt \neq \emptyset \} | \leq 2^{j+1} - 1$;
  \item $\forall \simpcoverelt,\simpcoverelt' \in \SimpCover^j, \quad \simpcoverelt \neq \simpcoverelt' \Rightarrow \simpcoverelt \cap \simpcoverelt' = \emptyset$.
 \end{itemize}
\end{lemma}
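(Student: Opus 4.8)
The plan is to realise the required cover inside the barycentric subdivision~$\SimpCxBarycentric$, using open stars of barycentres. Recall that the simplices of~$\SimpCxBarycentric$ correspond bijectively to the chains $\sigma_0 \subsetneq \cdots \subsetneq \sigma_k$ of nonempty simplices of~$\SimpCx$, that the vertices of~$\SimpCxBarycentric$ are exactly the barycentres~$\barycentre{\sigma}$ of the simplices~$\sigma$ of~$\SimpCx$, and that~$\barycentre{\sigma}$ is a vertex of the simplex of~$\SimpCxBarycentric$ associated to a chain precisely when~$\sigma$ occurs in that chain. For~$j = 0, \ldots, \dimn$ I would put
\[
 \SimpCover^j := \{ \barycentricstar{\sigma} \mid \sigma \text{ a } j\text{-simplex of } \SimpCx \},
\]
the collection of open stars in~$\SimpCxBarycentric$ of the barycentres of the $j$-dimensional simplices of~$\SimpCx$. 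Each member is open since open stars of vertices are open, and~$\SimpCover := \SimpCover^0 \cup \cdots \cup \SimpCover^{\dimn}$ covers~$|\SimpCx| = |\SimpCxBarycentric|$ because it is the set of open stars of all vertices of~$\SimpCxBarycentric$. When~$\SimpCx$ has only finitely many simplices — the case needed for Proposition~\ref{Propn:Stronger33}, where~$\SimpCx$ will be a nerve of a finite cover — each~$\SimpCover^j$ is finite.

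The combinatorial heart of the matter is the claim that for simplices~$\sigma,\tau$ of~$\SimpCx$,
\[
 \barycentricstar{\sigma} \cap \barycentricstar{\tau} \neq \emptyset
 \iff
 \sigma \subseteq \tau \ \text{ or }\ \tau \subseteq \sigma .
\]
To see it one unwinds the definition of the open star in~$\SimpCxBarycentric$: a point lies in~$\barycentricstar{\sigma}$ iff it lies in the interior of some simplex of~$\SimpCxBarycentric$ whose chain contains~$\sigma$; hence the two open stars meet iff there is a chain of simplices of~$\SimpCx$ containing both~$\sigma$ and~$\tau$, and such a chain exists iff~$\sigma$ and~$\tau$ are comparable. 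I expect this equivalence to be the one step that genuinely needs care; the remainder is bookkeeping with this fact.

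Granting it, the three asserted properties follow. Distinct $j$-simplices of~$\SimpCx$ are incomparable (comparable simplices of equal dimension coincide), so their open stars are disjoint; being nonempty they are in particular distinct, so~$\SimpCover^j$ is genuinely indexed by the $j$-simplices and satisfies the disjointness clause. For the overlap bound, fix~$\simpcoverelt_0 = \barycentricstar{\sigma} \in \SimpCover^j$ with~$\dim\sigma = j$; a set~$\simpcoverelt = \barycentricstar{\tau} \in \SimpCover^0 \cup \cdots \cup \SimpCover^j$ (so~$\dim\tau \leq j$) meets~$\simpcoverelt_0$ iff~$\tau$ is comparable to~$\sigma$, i.e.\ (since~$\dim\tau \leq \dim\sigma$) iff~$\tau$ is a face of~$\sigma$. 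As~$\tau \mapsto \barycentricstar{\tau}$ is injective (distinct vertices of~$\SimpCxBarycentric$ have distinct open stars), the number of such~$\simpcoverelt$ equals the number of nonempty faces of the $j$-simplex~$\sigma$, namely~$\sum_{i=0}^{j}\binom{j+1}{i+1} = 2^{j+1}-1$; this count already includes~$\simpcoverelt_0$ itself (the face~$\sigma$), so the bound~$2^{j+1}-1$ holds, in fact with equality. This completes the plan.
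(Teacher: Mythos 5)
Your proof is correct and follows essentially the same route as the paper: open stars in the barycentric subdivision of barycentres of $j$-simplices, intersection governed by comparability of simplices under inclusion, and the face count $2^{j+1}-1$. The only difference is cosmetic — you upgrade the key observation to an ``iff'' and spell out the binomial sum, while the paper only uses the forward implication and states the face count directly.
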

\begin{proof}
 Any point~$x \in |\SimpCx|$ is contained in the interior of a (unique) simplex~$\sigma_x$ and so~$x \in \simplexint(\sigma_x) \subseteq \barycentricstar{\sigma_x}$. 
 Hence for~$j = 0, \ldots, \dimn$ set
 \[
  \SimpCover^j := \{ \barycentricstar{\sigma} \, \colon \, \dim(\sigma) = j \}.
 \]
 and then~$\SimpCover := \SimpCover^0 \cup \ldots \cup \SimpCover^{\dimn}$ is an open cover of~$|\SimpCx|$.
 We need to bound the number of intersections.

 Fix~$j \in \{ 0, \ldots, \dimn\}$ and then an element of~$\SimpCover^j$ is of the form~$\barycentricstar{\sigma}$ for some simplex~$\sigma$ of~$\SimpCx$ with~$\dim(\sigma) = j$.
 If~$\tau$ is another simplex of~$\SimpCx$ such that~$\barycentricstar{\sigma} \cap \barycentricstar{\tau} \neq \emptyset$ then there is some simplex~$\eta$ of~$\SimpCxBarycentric$ which contains~$\hat{\sigma}$ and~$\hat{\tau}$ as vertices.
So by the definition of the barycentric subdivision, we have~$\sigma \subseteq \tau$ or~$\tau \subseteq \sigma$.

 This means that any element of~$\SimpCover^0 \cup \ldots \cup \SimpCover^j$ that intersects~$\barycentricstar{\sigma}$ non-trivially corresponds to a face of~$\sigma$.
 Therefore the number of intersections is bounded by the number of faces of~$\sigma$ (including~$\sigma$ itself), which is~$2^{\dim(\sigma)+1} - 1$.
 Remembering that~$\dim(\sigma) = j$ we obtain the desired bound.

 In particular, this argument also shows that no two elements of~$\SimpCover^j$ can intersect non-trivially.
\end{proof}

Now that we know we can construct an appropriate cover of a simplicial complex we need to decide which simplicial complex to use.
Our choice will be based on the following crucial observation.
\begin{lemma}\label{Lemma:PullBackIsRefinement}
 Let~$\FMetricSpace$ be a metric space.
 Let~$\Refinement$ be an open cover of~$\FMetricSpace$.
 There is a map~$f_{\Refinement} \colon \FMetricSpace \rightarrow |\Nerve(\Refinement)|$ from~$\FMetricSpace$ to the (realisation of the) nerve~$\Nerve(\Refinement)$ of~$\Refinement$ such that the pull-back of the canonical cover of~$|\Nerve(\Refinement)|$ (given by lemma~\ref{Lemma:CanonCoverOfSimpCx}) is a refinement of~$\Refinement$.
\end{lemma}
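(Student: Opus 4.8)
The plan is to recall the standard construction of a nerve map via a partition of unity and then verify that, with the canonical cover of Lemma~\ref{Lemma:CanonCoverOfSimpCx} in hand, this map automatically pulls the canonical cover back into a refinement of~$\Refinement$. Since~$\FMetricSpace$ is metric, hence paracompact and normal, the open cover~$\Refinement$ admits a locally finite partition of unity~$\{\varphi_{\refelt}\}_{\refelt \in \Refinement}$ subordinate to~$\Refinement$, meaning~$\mathrm{supp}(\varphi_{\refelt}) \subseteq \refelt$ and~$\sum_{\refelt} \varphi_{\refelt} \equiv 1$. Identifying the vertices of~$\Nerve(\Refinement)$ with the elements of~$\Refinement$ (so a finite subset~$\{\refelt_0,\ldots,\refelt_k\}$ spans a simplex iff~$\refelt_0 \cap \cdots \cap \refelt_k \neq \emptyset$), one defines~$f_{\Refinement}(\fmetricspaceelt)$ to be the point of~$|\Nerve(\Refinement)|$ with barycentric coordinate~$\varphi_{\refelt}(\fmetricspaceelt)$ on the vertex~$\refelt$. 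Local finiteness guarantees this is a well-defined continuous map into the realisation, and the set of vertices with nonzero coordinate at~$f_{\Refinement}(\fmetricspaceelt)$ is exactly~$\{\refelt : \fmetricspaceelt \in \mathrm{supp}(\varphi_{\refelt})\}$, which does span a simplex of~$\Nerve(\Refinement)$ because all these sets contain~$\fmetricspaceelt$.

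Next I would unwind what the canonical cover of~$|\Nerve(\Refinement)|$ looks like and how its pull-back sits inside~$\Refinement$. By Lemma~\ref{Lemma:CanonCoverOfSimpCx} every element of~$\SimpCover$ has the form~$\barycentricstar{\sigma}$ for a simplex~$\sigma$ of~$\Nerve(\Refinement)$, and~$\barycentricstar{\sigma} \subseteq \openstar(\sigma)$, the ordinary open star of~$\sigma$ in~$\Nerve(\Refinement)$; indeed~$\barycentricstar{\sigma}$ is contained in the open star of~$\sigma$ since any simplex of the barycentric subdivision containing~$\barycentre{\sigma}$ lies in the union of interiors of simplices having~$\sigma$ as a face. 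If~$\sigma$ is spanned by~$\{\refelt_0,\ldots,\refelt_k\}$ then~$\openstar(\sigma) = \{ p \in |\Nerve(\Refinement)| : \text{the coordinate of } p \text{ at } \refelt_i \text{ is nonzero for } i = 0,\ldots,k\}$, which is contained in the open star of the single vertex~$\refelt_0$. Therefore~$f_{\Refinement}^{-1}(\barycentricstar{\sigma}) \subseteq f_{\Refinement}^{-1}(\openstar(\refelt_0)) = \{\fmetricspaceelt : \varphi_{\refelt_0}(\fmetricspaceelt) > 0\} \subseteq \refelt_0 \in \Refinement$. So each element of the pulled-back cover is contained in some element of~$\Refinement$, i.e. the pull-back refines~$\Refinement$.

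I would then note for completeness that the pull-back is genuinely a cover: the~$\barycentricstar{\sigma}$ cover~$|\Nerve(\Refinement)|$ and~$f_{\Refinement}$ maps into it, so the preimages cover~$\FMetricSpace$; and the preimages are open by continuity of~$f_{\Refinement}$. This is essentially all there is to it, and the statement as written only asks for existence of such an~$f_{\Refinement}$, so no further structure (equivariance, dimension control) needs to be addressed here.

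The main obstacle I anticipate is not a deep one but a bookkeeping issue: making precise the claim that~$\barycentricstar{\sigma}$, which is defined in the barycentric subdivision~$\SimpCxBarycentric$ of the nerve, is contained in the open star~$\openstar(\sigma)$ computed in the original complex~$\Nerve(\Refinement)$, and then relating~$\openstar(\sigma)$ to the vanishing locus of a partition-of-unity function. One must be careful that the vertices of~$\Nerve(\Refinement)$ are being identified with the~$\refelt \in \Refinement$ consistently, and that "the coordinate at vertex~$\refelt_0$ is positive" really does cut out a subset of~$\refelt_0$ — this is exactly where subordinateness of the partition of unity (support inside~$\refelt$, not merely the function being supported on the closure) is used. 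Everything else — continuity of~$f_{\Refinement}$, local finiteness, that the support sets containing a given point span a simplex — is routine point-set topology over a paracompact space.
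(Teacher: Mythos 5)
Your argument is essentially the paper's: both build the nerve map from a partition of unity subordinate to~$\Refinement$ and then use the containment~$\barycentricstar{\sigma} \subseteq \openstar([\refelt])$ for a vertex~$[\refelt]$ of~$\sigma$ to conclude that preimages of the canonical cover land inside elements of~$\Refinement$. The one small difference is that the paper fixes the concrete partition of unity
\[
\varphi_{\refelt}(z) \;=\; \frac{\distance(z,\FMetricSpace \setminus \refelt)}{\sum_{\refelt' \in \Refinement} \distance(z, \FMetricSpace \setminus \refelt')},
\]
which gives the exact equality~$f_{\Refinement}^{-1}\bigl(\openstar([\refelt])\bigr) = \refelt$ and, more importantly for the sequel, is automatically~$\GroupFin$-equivariant when~$\Refinement$ is an equivariant cover of isometries --- a fact the paper relies on in the proof of Proposition~\ref{Propn:Stronger33}. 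Your appeal to an abstract locally finite subordinate partition of unity yields only the inclusion~$f_{\Refinement}^{-1}\bigl(\openstar([\refelt])\bigr) \subseteq \refelt$, which is all the present lemma requires, but if you carried that map forward you would need to arrange equivariance separately (for instance by averaging the~$\varphi_{\refelt}$ over~$\GroupFin$). One minor slip worth flagging: the set of vertices with nonzero barycentric coordinate at~$f_{\Refinement}(\fmetricspaceelt)$ is~$\{\refelt : \varphi_{\refelt}(\fmetricspaceelt) > 0\}$, which is contained in but not in general equal to~$\{\refelt : \fmetricspaceelt \in \mathrm{supp}(\varphi_{\refelt})\}$; this does not affect your argument, since later you correctly use only the inclusion~$\{\varphi_{\refelt_0} > 0\} \subseteq \refelt_0$.
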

\begin{proof}
 The continuous map from~$\FMetricSpace$ to~$\Nerve(\Refinement)$ is given by
 \[
  f_{\Refinement} \colon \FMetricSpace \rightarrow |\Nerve(\Refinement)| ,
  \quad z \mapsto \sum_{\refelt \in \Refinement} \frac{\distance(z,\FMetricSpace \backslash \refelt)}{\sum_{\refelt' \in \Refinement} \distance(z, \FMetricSpace \backslash \refelt')} [\refelt].
 \]
 Informally, this maps a point to a (weighted) sum of the elements of~$\Refinement$ that contain the point.
 (The weighting is to ensure the map is continuous.)
 In particular, the support of~$f_{\Refinement}(z)$ contains~$[\refelt]$ if and only if~$z \in \refelt$.
 Hence for any~$\refelt \in \Refinement$, if~$\openstar([\refelt])$ denotes the open star of the vertex~$[\refelt]$ in~$\Nerve(\Refinement)$ then~$f_{\Refinement}^{-1} (\openstar([\refelt])) = W$.
 
 Let~$\SimpCover$ be the canonical cover of~$|\Nerve(\Refinement)|$ given by lemma~\ref{Lemma:CanonCoverOfSimpCx}.
 Then the pull-back~$\hat{\IndCover} = f_{\Refinement}^{-1} (\SimpCover)$ is a cover of~$\FMetricSpace$.
 Any element of~$\SimpCover$ is of the form~$\barycentricstar{\sigma}$ with~$\sigma$ a simplex of~$\Nerve(\Refinement)$.
 If~$[\refelt]$ is a vertex of~$\sigma$ then $\barycentricstar{\sigma} \subseteq \openstar([\refelt])$, which follows from the definition of the barycentric subdivision.
 Hence
 \[
  f_{\Refinement}^{-1} \big( \barycentricstar{\sigma} \big) \subseteq
  f_{\Refinement}^{-1} \big( \openstar([\refelt]) \big)      =  \refelt
 \]
 and so we have shown that~$\hat{\IndCover}$ is a refinement of~$\Refinement$.
\end{proof}

Therefore if we take our simplicial complex to be the nerve of some cover~$\Refinement$ then the pull-back of the nerve's canonical cover will inherit some properties of~$\Refinement$. We call any property of a cover that is always inherited by refinements a \emph{smallness property}.
By choosing this cover~$\Refinement$ carefully we can ensure that properties~$(\ref{Item:IndCoverBounded})$,~$(\ref{Item:IndCoverIntersectingU})$, and~$(\ref{Item:IndCoverInvariance})$ are satisfied.

It is in this next lemma that we need to use the work of Section~\ref{Section:EquivariantRefinements}, and here we deviate from the proof of \cite{BartelsLueckReich2008ECHG}*{Proposition~3.3}.

\begin{lemma}\label{Lemma:CoverOfZ}
 Let~$\GroupFin$,~$\FMetricSpace$,~$\FMetricSpaceSubset$,~$\BoundaryBound$, and~$\CoverArb$ be as in proposition~\ref{Propn:Stronger33}.
 For any~$\DiameterBound >0$ there is an open cover~$\Refinement$ of~$\FMetricSpace$ such that
 \begin{description}
  \item[$(0)$] $\dim(\Refinement) = \dimn$;
  \item[$(\ref{Item:IndCoverBounded})$] $\forall \refelt \in \Refinement, \quad \diam(\refelt) < \DiameterBound$;
  \item[$(\ref{Item:IndCoverIntersectingU})$] $\forall \refelt \in \Refinement, \quad | \{ \collectionelt \in \Collection \, | \, \refelt \nsubseteq \collectionelt , \collectionelt \cap \refelt \neq \emptyset \} | \leq \BoundaryBound$;
  \item[$(\ref{Item:IndCoverInvariance})$] $\forall \refelt \in \Refinement , \, \forall \groupfinelt \in \GroupFin, \quad \groupfinelt \refelt \in \Refinement$.
 \end{description}
\end{lemma}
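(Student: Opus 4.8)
The plan is to deduce the lemma from a single application of Proposition~\ref{Propn:EquivariantRefinements}. That proposition turns an open $\GroupFin$-cover of an $\dimn$-dimensional metric space into an open $\GroupFin$-refinement of dimension $\leq \dimn$, and any refinement automatically inherits ``smallness'' conditions such as a bound on diameters or being contained in prescribed open sets. So it is enough to produce one auxiliary open \emph{$\GroupFin$-cover} of $\FMetricSpace$ all of whose members have diameter $< \DiameterBound$ and each of which lies inside a neighbourhood on which the position relative to $\Collection$ is under control; Proposition~\ref{Propn:EquivariantRefinements} applied to this auxiliary cover then delivers~$\Refinement$.

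For the ``position relative to $\Collection$'' I would, for each $\fmetricspaceelt \in \FMetricSpace$, introduce the set
\[
 A_{\fmetricspaceelt} \;:=\; \bigcap_{\collectionelt \in \Collection,\, \fmetricspaceelt \in \collectionelt} \collectionelt \;\;\cap\;\; \bigcap_{\collectionelt \in \Collection,\, \fmetricspaceelt \notin \overline{\collectionelt}} \big( \FMetricSpace \setminus \overline{\collectionelt} \big),
\]
which is open (here $\Collection$ is finite) and contains $\fmetricspaceelt$. Because $\GroupFin$ acts by isometries and $\Collection$ is $\GroupFin$-equivariant one checks directly that $\groupfinelt \cdotp A_{\fmetricspaceelt} = A_{\groupfinelt \fmetricspaceelt}$ for every $\groupfinelt \in \GroupFin$. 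The key feature of $A_{\fmetricspaceelt}$ is the following: if an open set $\refelt$ satisfies $\refelt \subseteq A_{\fmetricspaceelt}$, then for every $\collectionelt \in \Collection$ with $\fmetricspaceelt \in \collectionelt$ we have $\refelt \subseteq \collectionelt$, while for every $\collectionelt \in \Collection$ with $\fmetricspaceelt \notin \overline{\collectionelt}$ we have $\refelt \cap \collectionelt = \emptyset$; hence the only $\collectionelt \in \Collection$ which can satisfy both $\refelt \nsubseteq \collectionelt$ and $\refelt \cap \collectionelt \neq \emptyset$ are those with $\fmetricspaceelt \in \partial \collectionelt$, and the hypothesis on $\Collection$ forces the number of such $\collectionelt$ to be at most $\BoundaryBound$ (a point lying on more than $\BoundaryBound$ of the boundaries would be a point of an empty intersection). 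Thus any cover each of whose members is contained in some $A_{\fmetricspaceelt}$ satisfies property~$(\ref{Item:IndCoverIntersectingU})$ for free. (This is the substitute for the ``a small connected set meets few boundaries'' argument, which is not available here since $\FMetricSpace$ is not assumed locally connected.)

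To build the auxiliary cover I would, for each $\fmetricspaceelt \in \FMetricSpace$, choose a radius $r_{\fmetricspaceelt} > 0$ depending only on the $\GroupFin$-orbit of $\fmetricspaceelt$ --- this is possible because all the constraints below refer to $\GroupFin$-invariant data (use $\groupfinelt \cdotp A_{\fmetricspaceelt} = A_{\groupfinelt \fmetricspaceelt}$ and the fact that conjugation permutes $\{\groupfinelt \in \GroupFin : \groupfinelt \fmetricspaceelt \neq \fmetricspaceelt\}$) --- small enough that $\Ball_{\distance}(\fmetricspaceelt, r_{\fmetricspaceelt}) \subseteq A_{\fmetricspaceelt}$, that $2 r_{\fmetricspaceelt} < \DiameterBound$, and that $2 r_{\fmetricspaceelt} < \distance(\groupfinelt \fmetricspaceelt, \fmetricspaceelt)$ for every $\groupfinelt \in \GroupFin$ with $\groupfinelt \fmetricspaceelt \neq \fmetricspaceelt$. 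Let $\mathcal{B} := \{ \Ball_{\distance}(\fmetricspaceelt, r_{\fmetricspaceelt}) : \fmetricspaceelt \in \FMetricSpace \}$. It is an open cover of $\FMetricSpace$; it is $\GroupFin$-equivariant, since $\groupfinelt \cdotp \Ball_{\distance}(\fmetricspaceelt, r_{\fmetricspaceelt}) = \Ball_{\distance}(\groupfinelt \fmetricspaceelt, r_{\groupfinelt \fmetricspaceelt})$; and it is even a $\GroupFin$-cover, because if $\groupfinelt \cdotp \Ball_{\distance}(\fmetricspaceelt, r_{\fmetricspaceelt})$ meets $\Ball_{\distance}(\fmetricspaceelt, r_{\fmetricspaceelt})$ then $\distance(\groupfinelt \fmetricspaceelt, \fmetricspaceelt) < 2 r_{\fmetricspaceelt}$, which by the choice of $r_{\fmetricspaceelt}$ forces $\groupfinelt \fmetricspaceelt = \fmetricspaceelt$ and hence $\groupfinelt \cdotp \Ball_{\distance}(\fmetricspaceelt, r_{\fmetricspaceelt}) = \Ball_{\distance}(\fmetricspaceelt, r_{\fmetricspaceelt})$. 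Now I apply Proposition~\ref{Propn:EquivariantRefinements} to the $\GroupFin$-cover $\mathcal{B}$ of the $\dimn$-dimensional metric space $\FMetricSpace$: it yields an open $\GroupFin$-refinement $\Refinement$ with $\dim(\Refinement) \leq \dimn$. Each $\refelt \in \Refinement$ is contained in some $\Ball_{\distance}(\fmetricspaceelt, r_{\fmetricspaceelt}) \in \mathcal{B}$, so $\diam(\refelt) \leq 2 r_{\fmetricspaceelt} < \DiameterBound$ (property~$(\ref{Item:IndCoverBounded})$) and $\refelt \subseteq A_{\fmetricspaceelt}$, whence property~$(\ref{Item:IndCoverIntersectingU})$ by the previous paragraph; $\Refinement$ is in particular $\GroupFin$-equivariant (property~$(\ref{Item:IndCoverInvariance})$); and $\dim(\Refinement) \leq \dimn$ gives property~$(0)$. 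If a finite cover is desired, one replaces $\Refinement$ by the $\GroupFin$-saturation of a finite subcover (which exists since $\FMetricSpace$ is compact); this keeps all the stated properties.

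The step I expect to be the real obstacle --- indeed, the reason the whole detour is needed --- is that Proposition~\ref{Propn:EquivariantRefinements} requires a genuine $\GroupFin$-cover as input and not merely a $\GroupFin$-equivariant cover, since the dimension estimate in its proof uses the ``disjoint or equal'' condition; this is exactly why $\mathcal{B}$ is assembled from balls whose radii are smaller than the displacements $\distance(\groupfinelt \fmetricspaceelt, \fmetricspaceelt)$. Everything else --- the identity $\groupfinelt \cdotp A_{\fmetricspaceelt} = A_{\groupfinelt \fmetricspaceelt}$, the $\GroupFin$-invariant choice of the radii, and the inheritance of the two smallness properties by the refinement --- is routine.
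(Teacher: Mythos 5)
Correct, and essentially the same approach as the paper's proof: you define the same neighbourhoods $A_{\fmetricspaceelt}$, make an $\GroupFin$-cover out of orbit-symmetrized small neighbourhoods, and feed it into Proposition~\ref{Propn:EquivariantRefinements}. The only cosmetic differences are that you avoid the paper's case split (both cases are in fact subsumed by the general $A_{\fmetricspaceelt}$) and symmetrize via an orbit-invariant radius rather than taking $W_{\fmetricspaceelt}=\bigcap_{\groupfinelt\in\GroupFin}\groupfinelt^{-1}C_{\groupfinelt \fmetricspaceelt}$.
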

\begin{proof}
 The idea of this proof is to construct an $\GroupFin$-cover that satisfies the smallness conditions~$(\ref{Item:IndCoverBounded})$ and~$(\ref{Item:IndCoverIntersectingU})$, and then pass to a refinement using proposition~\ref{Propn:EquivariantRefinements}.
We construct our cover by finding an appropriate neighbourhood of every point in~$\FMetricSpace$.
 
 Fix~$z \in \FMetricSpace$.
 First suppose that~$z$ is not contained in the closure of any element of~$\Collection$.
 Since~$\Collection$ is finite, the set~$\bigcup_{\collectionelt \in \Collection} \overline{\collectionelt}$ is a closed set.
Thus there is some~$\epsilon_z >0$ such that~$\BallZ{z}{\epsilon_z} \subseteq \FMetricSpace \backslash ( \bigcup_{\collectionelt \in \Collection} \overline{\collectionelt} )$.
 We may take~$\epsilon_z < \frac{1}{2} \DiameterBound$ and then~$W_z := \BallZ{z}{\epsilon_z}$ has diameter~$<\DiameterBound$ and doesn't meet any~$\overline{\collectionelt}$.
 The set~$\bigcup_{\collectionelt \in \Collection} \overline{\collectionelt}$ is~$\GroupFin$-invariant so we may also pick the~$\epsilon_z$ small enough such that for any~$\groupfinelt \in \GroupFin$ we have~$\epsilon_{\groupfinelt z} = \epsilon_z$ and then~$W_{\groupfinelt z} = \groupfinelt W_z$.
 For disjoint orbits, the group~$\GroupFin$ is finite so we can pick~$\epsilon_z$ small enough that if~$\BallZ{z}{\epsilon_z} \cap \groupfinelt \BallZ{z}{\epsilon_z} \neq \emptyset$ for some~$\groupfinelt \in \GroupFin$ then~$\groupfinelt z = z$.
 Thus~$W_z$ is a suitable open neighbourhood of~$z$.
 
 Now suppose that there is some element of~$\Collection$ whose closure contains~$z$.
 Set
 \[
  A^1_z = \bigcap_{\stackrel{\collectionelt \in \Collection}{z \in \collectionelt}} U \, ,
  \qquad
  A^0_z = \bigcap_{\stackrel{\collectionelt \in \Collection}{z \notin \overline{\collectionelt}}} \overline{\collectionelt}^{c}
 \]
 where~$\overline{\collectionelt}^{c}$ denotes the complement of the set~$\overline{\collectionelt}$ in~$\FMetricSpace$.
 By convention we say~$A^1_z = \FMetricSpace$ if $z$ does not lie in any $\collectionelt \in \Collection$ and similarly $A^0_z = \FMetricSpace$ if $z$ is contained in all the $\overline{\collectionelt}$ for $\collectionelt \in \Collection$.
 Both~$A^1_z$ and~$A^0_z$ are open sets by the finiteness of~$\Collection$, and non-empty since $z \in A^1_z \cap A^0_z =: A_z$.
 For any~$\collectionelt \in \Collection$, if $z \in \collectionelt$ then $A^1_z \subseteq \collectionelt$ and if $z \notin \overline{\collectionelt}$ then $A^0_z \cap \collectionelt = \emptyset$.
 Therefore if $A_z \cap \collectionelt \neq \emptyset$ but $A_z \nsubseteq \collectionelt$ then $z \in \partial \collectionelt$.
 Thus
 \[
  | \{ \collectionelt \in \Collection \, | \, A_z \nsubseteq \collectionelt , \collectionelt \cap A_z \neq \emptyset \} |
  =
  | \{ \collectionelt \in \Collection \, | \, z \in \partial\collectionelt \} |
  \leq \BoundaryBound.
 \]
 Pick~$B_z$ to be an open neighbourhood of~$z$ in~$\FMetricSpace$ such that for every~$\groupfinelt \in \GroupFin$ if~$\groupfinelt B_z \cap B_z \neq \emptyset$ then~$\groupfinelt z = z$.
 Such a neighbourhood exists since~$\GroupFin$ is finite (and~$\FMetricSpace$ is Hausdorff).
 Thus the orbit of~$B_z$ is disjoint. 
 
 If we set~$C_z = A_z \cap B_z \cap \BallZ{z}{\frac{1}{2}\DiameterBound}$ then we also have the diameter bounded by~$\DiameterBound$.
 We still need these neighbourhoods to be $\GroupFin$-equivariant, meaning $C_{\groupfinelt z} = \groupfinelt C_z$ for all~$\groupfinelt \in \GroupFin$.
 We can achieve this by setting $W_z = \bigcap_{\groupfinelt \in \GroupFin} \groupfinelt^{-1} C_{\groupfinelt z}$.
 
 Then~$\Refinement_0 := \{ W_z \, | \, z \in \FMetricSpace \}$ is an open cover of~$\FMetricSpace$ satisfying
 \begin{itemize}
  \item $\forall \refelt \in \Refinement_0, \quad \diam(\refelt) < \DiameterBound$;
  \item $\forall \refelt \in \Refinement_0, \quad | \{ \collectionelt \in \Collection \, | \, \refelt \nsubseteq \collectionelt , \collectionelt \cap \refelt \neq \emptyset \} | \leq \BoundaryBound$;
  \item $\forall \refelt \in \Refinement_0 , \, \forall \groupfinelt \in \GroupFin, \quad \groupfinelt \refelt \in \Refinement$.
 \end{itemize}
 In fact,~$\Refinement_0$ is an open $\GroupFin$-cover with disjoint orbits and so we can apply proposition~\ref{Propn:EquivariantRefinements} to this cover to obtain an $\GroupFin$-refinement~$\Refinement$ which has dimension~$\leq \dimn$ and inherits the two smallness conditions from~$\Refinement_0$.
\end{proof}

Using these three lemmas we can prove proposition~\ref{Propn:Stronger33}.

\begin{proof}[Proof of Proposition~\ref{Propn:Stronger33}]
 Lemma~\ref{Lemma:CoverOfZ} gives us a cover~$\Refinement$ of~$\FMetricSpace$ that satisfies properties~$(\ref{Item:IndCoverBounded})$,~$(\ref{Item:IndCoverIntersectingU})$ and~$(\ref{Item:IndCoverInvariance})$.
 The nerve~$\Nerve(\Refinement)$ of~$\Refinement$ is then an $\dimn$\nobreakdash-\hspace{0pt}dimensional simplicial complex and the group action on~$\Refinement$ induces an action of~$\GroupFin$ on~$\Nerve(\Refinement)$.
 The (realisation of) the nerve has a canonical cover~$\SimpCover = \SimpCover^0 \cup \ldots \cup \SimpCover^{\dimn}$ given by lemma~\ref{Lemma:CanonCoverOfSimpCx}.
 The continuous map
 \[
  f_{\Refinement} \colon \FMetricSpace \rightarrow |\Nerve(\Refinement)|
  \quad z \mapsto \sum_{\refelt \in \Refinement} \frac{\distance(z,\FMetricSpace \backslash \refelt)}{\sum_{\refelt' \in \Refinement} \distance(z, \FMetricSpace \backslash \refelt')} [\refelt]
 \]
 from lemma~\ref{Lemma:PullBackIsRefinement} is $\GroupFin$-equivariant.
 For~$j = 0,\ldots,\dimn$ define~$\hat{\IndCover}^j = f_{\Refinement}^{-1} (\SimpCover^j)$.
 Then~$\hat{\IndCover} := \hat{\IndCover}^0 \cup \ldots \cup \hat{\IndCover}^{\dimn}$ is the pull-back of the canonical cover of~$|\Nerve(\Refinement)|$ and thus is a refinement of~$\Refinement$ by lemma~\ref{Lemma:PullBackIsRefinement}.
 From these definitions it immediately follows that~$\hat{\IndCover}$ satisfies property~$(\ref{Item:IndCoverPartition})$.

 Moreover, we claim that~$\hat{\IndCover}$ satisfies properties~$(\ref{Item:IndCoverBounded})$,~$(\ref{Item:IndCoverIntersectingU})$,~$(\ref{Item:IndCoverSelfIntersections})$, and~$(\ref{Item:IndCoverInvariance})$.
 
 Properties~$(\ref{Item:IndCoverBounded})$ and~$(\ref{Item:IndCoverIntersectingU})$ are smallness conditions which are satisfied by~$\Refinement$ so these properties are inherited by~$\hat{\IndCover}$.
 
 The cover~$\hat{\IndCover}$ satisfies property~$(\ref{Item:IndCoverSelfIntersections})$ since it is the pull-back of~$\SimpCover$ and~$\SimpCover$ satisfies property~$(\ref{Item:IndCoverSelfIntersections})$.
 
 The cover~$\hat{\IndCover}$ is $\GroupFin$-invariant (i.e. satisfies property~$(\ref{Item:IndCoverInvariance})$) since the cover~$\SimpCover$ is $\GroupFin$-invariant and the map~$f_{\Refinement}$ is $\GroupFin$-equivariant.

 However we do not necessarily have the remaining two properties~$(\ref{Item:IndCoverDisjointClosures})$ and~$(\ref{Item:IndCoverIntClosure})$.
 We do have a weaker version of property~$(\ref{Item:IndCoverDisjointClosures})$, namely we know that
 \[
  \forall j=0,\ldots,\dimn, \, \forall \indcoverelt,\indcoverelt' \in \hat{\IndCover}^j, \quad \indcoverelt \neq \indcoverelt' \Rightarrow \indcoverelt \cap \indcoverelt' = \emptyset
 \]
 because this is true of the cover~$\SimpCover$ of~$\Nerve(\Refinement)$.
 To get a cover of~$\FMetricSpace$ that also satisfies properties~$(\ref{Item:IndCoverDisjointClosures})$ and~$(\ref{Item:IndCoverIntClosure})$ we shrink the elements of the cover~$\hat{\IndCover}$, being careful to not lose any of the other properties, which we do as follows.
 
 For~$\indcoverelt \in \hat{\IndCover}$ and~$\epsilon >0$ set~$\indcoverelt^{-\epsilon} = \{ z \in \FMetricSpace \, | \, \BallZ{z}{\epsilon} \subseteq \indcoverelt \}$ and then set $\indcoverelt_{\epsilon} = (\overline{\indcoverelt^{-\epsilon}})^{\circ}$.
 The collection~$\{ \indcoverelt_{\sfrac{1}{m}} \, | \, \indcoverelt \in \hat{\IndCover}, \, m \in \NN \}$ is an open cover of~$\FMetricSpace$.
 By compactness there exists some~$m \in \NN$ such that~$\IndCover:= \{ \indcoverelt_{\sfrac{1}{m}} \, | \indcoverelt \in \hat{\IndCover} \}$ is an open cover of~$\FMetricSpace$.
 
 Now we claim that this satisfies all the properties we want.
 
 Set~$\IndCover^j = \{ \indcoverelt_{\sfrac{1}{m}} \, | \, \indcoverelt \in \hat{\IndCover}^j\}$ for property~$(\ref{Item:IndCoverPartition})$.
 The union of these~$\IndCover^j$ is~$\IndCover$ which is an open cover by the choice of~$m$.
 
 Properties~$(\ref{Item:IndCoverBounded})$ and~$(\ref{Item:IndCoverIntersectingU})$ are smallness conditions so~$\IndCover$ inherits them from~$\hat{\IndCover}$.
 Property~$(\ref{Item:IndCoverSelfIntersections})$ is not a smallness condition in general but we have not increased the number of elements in our cover and we cannot get more intersections by shrinking the sets so~$\IndCover$ also inherits property~$(\ref{Item:IndCoverSelfIntersections})$ from~$\hat{\IndCover}$.
 
 For property~$(\ref{Item:IndCoverDisjointClosures})$, observe that for any~$\indcoverelt \in \hat{\IndCover}$, the closure of~$\indcoverelt_{\sfrac{1}{m}}$ is contained in~$\indcoverelt$.
 Hence if the closures of~$\indcoverelt_{\sfrac{1}{m}},\indcoverelt'_{\sfrac{1}{m}} \in \IndCover^j$ intersect non-trivially then~$\indcoverelt \cap \indcoverelt' \neq \emptyset$ but then~$\indcoverelt = \indcoverelt'$ since~$\hat{\IndCover}$ satisfies a weaker version of property~$(\ref{Item:IndCoverDisjointClosures})$.
 
 The cover~$\hat{\IndCover}$ is $\GroupFin$-invariant and we have shrunk its elements in a uniform way, hence the cover~$\IndCover$ is also $\GroupFin$-invariant, i.e. it satisfies property~$(\ref{Item:IndCoverInvariance})$.
 
 Finally, every element of~$\IndCover$ is the interior of a closed set, thus condition~$(\ref{Item:IndCoverIntClosure})$ is fulfilled. 
\end{proof}

We still need to strengthen~\cite{BartelsLueckReich2008ECHG}*{Proposition~3.2}.
\begin{proposition}\label{Propn:Stronger32}
 Let~$\FMetricSpace$ be a compact metric space with covering dimension~$\dimn$.
 Let~$\GroupFin$ be a finite group that acts on~$\FMetricSpace$ by isometries.
 Let~$\Collection$ be a finite $\GroupFin$-equivariant collection of open $\GroupFin$-subsets of~$\FMetricSpace$.
 Given any map~$\alpha \colon \Collection \rightarrow \OpenSubsetsZ$, where~$\OpenSubsetsZ$ denotes the set of open subsets of~$\FMetricSpace$, if~$\overline{\alpha(\collectionelt)} \subseteq \collectionelt$ for all~$\collectionelt \in \Collection$ then there is an $\GroupFin$-equivariant map $\beta \colon \Collection \rightarrow \OpenSubsetsZ$ satisfying
 \begin{enumerate}
  \item $\forall \collectionelt \in \Collection, \, \alpha(\collectionelt) \subseteq \overline{\alpha(\collectionelt)} \subseteq \beta(\collectionelt) \subseteq \overline{\beta(\collectionelt)} \subseteq \collectionelt$,
  \item For any~$\Collection_0 \subseteq \Collection$, if~$\Collection_0$ contains more than~$\dimn+1$ elements then 
  \[
   \bigcap_{\collectionelt \in \Collection_0} \partial\beta(\collectionelt) = \emptyset.
  \]
 \end{enumerate}
\end{proposition}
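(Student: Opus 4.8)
The plan is to follow the general-position argument of \cite{BartelsLueckReich2008ECHG}*{Proposition~3.2}, but to replace the one step that uses a bound on the order of finite subgroups by a passage to the quotient $\FMetricSpaceQuotient$, which by Corollary~\ref{Cor:DimensionOfQuotient} again has covering dimension $\dimn$.

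First I would reduce the statement to one choice per $\GroupFin$-orbit of $\Collection$. For $\collectionelt \in \Collection$ set
\[
 A_\collectionelt := \bigcup_{\groupfinelt \in \GroupFin} \groupfinelt^{-1}\,\overline{\alpha(\groupfinelt\collectionelt)}.
\]
Since $\overline{\alpha(\groupfinelt\collectionelt)} \subseteq \groupfinelt\collectionelt$ for every $\groupfinelt$, the set $A_\collectionelt$ is closed, lies in $\collectionelt$, is invariant under the stabiliser $\GroupFin_\collectionelt$, and any $\GroupFin$-equivariant $\beta$ with $\overline{\alpha(-)} \subseteq \beta(-)$ satisfies $A_\collectionelt \subseteq \beta(\collectionelt)$ (apply equivariance to the translates of $\collectionelt$). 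So I would fix orbit representatives $\collectionelt_1, \dots, \collectionelt_m$ of $\Collection$, put $P_i := \GroupFin\cdot A_{\collectionelt_i}$ and $Q_i := \FMetricSpace \setminus \GroupFin\cdot\collectionelt_i$ (disjoint closed $\GroupFin$-invariant sets), and observe that it is enough to produce $\GroupFin$-invariant open sets $\indcoverelt_1,\dots,\indcoverelt_m$ with $P_i \subseteq \indcoverelt_i \subseteq \overline{\indcoverelt_i} \subseteq \GroupFin\cdot\collectionelt_i$ such that the common intersection of any $\dimn+2$ of the frontiers $\partial\indcoverelt_i$ is empty. Indeed, one then sets $\beta(\groupfinelt\collectionelt_i) := \groupfinelt(\indcoverelt_i \cap \collectionelt_i)$; this is well defined and $\GroupFin$-equivariant, and because each $\collectionelt \in \Collection$ is open and closed in $\GroupFin\cdot\collectionelt$ one gets $\overline{\indcoverelt_i \cap \collectionelt_i} = \overline{\indcoverelt_i} \cap \collectionelt_i \subseteq \collectionelt_i$, so condition~(1) holds, together with $\partial\beta(\groupfinelt\collectionelt_i) = \groupfinelt(\partial\indcoverelt_i \cap \collectionelt_i) \subseteq \partial\indcoverelt_i$. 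Since distinct members of one orbit are disjoint (they are $\GroupFin$-subsets), a point lies in at most one set $\partial\beta(\collectionelt)$ per orbit; hence the family $\{\partial\beta(\collectionelt)\}_{\collectionelt\in\Collection}$ has multiplicity at most that of $\{\partial\indcoverelt_1,\dots,\partial\indcoverelt_m\}$, which is $\leq \dimn+1$ — this is condition~(2).

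To produce the $\indcoverelt_i$ I would descend to the quotient. The projection $\pi \colon \FMetricSpace \to \FMetricSpaceQuotient$ is a closed continuous surjection onto a compact metric space of covering dimension $\dimn$, and since $P_i, Q_i$ are closed and $\GroupFin$-invariant, $\pi(P_i)$ and $\pi(Q_i)$ are disjoint closed subsets of $\FMetricSpaceQuotient$. Now I would invoke the classical general-position theorem for partitions: in a normal space of covering dimension $\dimn$, for every finite family of pairs of disjoint closed sets there exist partitions, one between each pair, no $\dimn+2$ of which have a point in common (the finite-family form of the Eilenberg--Otto/Hemmingsen characterisation of covering dimension). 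This yields separators $L_i$ with $\FMetricSpaceQuotient \setminus L_i = D_i \sqcup D_i'$ open, $\pi(P_i) \subseteq D_i$, $\pi(Q_i) \subseteq D_i'$, and $\bigcap_{i \in J} L_i = \emptyset$ whenever $|J| = \dimn+2$. Put $\indcoverelt_i := \pi^{-1}(D_i)$. Then $\indcoverelt_i$ is open and $\GroupFin$-invariant, $P_i = \pi^{-1}(\pi(P_i)) \subseteq \indcoverelt_i$, and $\overline{\indcoverelt_i} \subseteq \pi^{-1}(\overline{D_i}) \subseteq \pi^{-1}(D_i \cup L_i)$, which is disjoint from $Q_i = \pi^{-1}(\pi(Q_i))$ because $L_i$ is a partition and so $L_i \cap \pi(Q_i) = \emptyset$; hence $\overline{\indcoverelt_i} \subseteq \GroupFin\cdot\collectionelt_i$. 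Finally $\partial\indcoverelt_i \subseteq \pi^{-1}(L_i)$, so a point lying in $\dimn+2$ of the sets $\partial\indcoverelt_i$ would have image in an empty intersection of the corresponding $L_i$ — impossible. Thus the $\indcoverelt_i$ are as required, and unwinding the reduction of the previous paragraph gives the desired $\beta$.

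The only non-formal ingredient is the general-position theorem for partitions, so I expect the main work to be pinning down the precise statement in the literature (for instance in \cite{Pears1975DimensionTheory}) and checking that closures and frontiers behave under $\pi^{-1}$ as claimed; everything else is routine bookkeeping with the finite group action. It is precisely here that \cite{BartelsLueckReich2008ECHG}*{Proposition~3.2} used the bound on the order of finite subgroups — there the separators themselves had to be made $\GroupFin$-invariant and the resulting multiplicity was estimated in terms of $|\GroupFin|$ — whereas carrying out the general-position argument on $\FMetricSpaceQuotient$, where there is no group action, makes that bound unnecessary.
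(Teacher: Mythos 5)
Your argument is correct, but it takes a genuinely different route from the paper's. The paper does \emph{not} replace the Bartels--L\"uck--Reich argument; it imports it nearly verbatim and patches one step. In \cite{BartelsLueckReich2008ECHG}*{Proposition~3.2} one chooses a fine open cover $\RefQuotient$ of~$\FMetricSpace$ of dimension $\leq n$ and forces equivariance brutally by setting $\RefQuotient_{\GroupFin}:=\GroupFin\RefQuotient$, which is exactly where the blow-up $\dim(\RefQuotient_{\GroupFin})\leq (n+1)|\GroupFin|$ appears. The paper's sole modification is to replace that translation step by an application of Proposition~\ref{Propn:EquivariantRefinements} to the cover by $\delta/3$-balls, yielding an $\GroupFin$-cover of dimension $\leq n$; after that, the barycentric/nerve machinery of~\cite{BartelsLueckReich2008ECHG} runs unchanged and supplies $\beta$.

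You instead discard the nerve argument entirely. Working per $\GroupFin$-orbit, you reduce to separating the closed invariant set $P_i=\GroupFin\cdot A_{\collectionelt_i}$ from $Q_i=\FMetricSpace\setminus\GroupFin\collectionelt_i$, descend to the quotient $\FMetricSpaceQuotient$ (same dimension by Corollary~\ref{Cor:DimensionOfQuotient}), and invoke the Eilenberg--Otto ``partitions in general position'' characterisation of covering dimension there; equivariance is then automatic because the separators are pulled back along $\pi$. The bookkeeping — well-definedness of $\beta(\groupfinelt\collectionelt_i):=\groupfinelt(\indcoverelt_i\cap\collectionelt_i)$, the identity $\overline{\indcoverelt_i\cap\collectionelt_i}=\overline{\indcoverelt_i}\cap\collectionelt_i$ coming from $\collectionelt_i$ being clopen in $\GroupFin\collectionelt_i$, and the multiplicity count using disjointness of translates within an orbit — all checks out.

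The one ingredient to pin down before this could replace the published proof is the exact reference for the general-position partition theorem for a finite family of pairs of disjoint closed sets in a normal space (it is in Engelking's \emph{Dimension Theory}, and in fact one gets the stronger bound that every $n+1$ of the partitions, not just every $n+2$, have empty intersection). Your version is more self-contained and arguably more elementary: it bypasses barycentric subdivisions, nerves and the long detour through \cite{BartelsLueckReich2008ECHG} entirely, at the cost of leaning on a classical partition theorem. The paper's version is a minimal patch to an existing proof, which has the advantage that everything except one step is already vetted in the literature.
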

\begin{proof}
 We won't give all the details here since almost all of the details are in \cite{BartelsLueckReich2008ECHG}.
 We will explain where they used the order of $\GroupFin$ and how to avoid this problem.
 The rest of the proof will only be sketched.
 
 First pick an $\GroupFin$-invariant metric~$d$ on~$\FMetricSpace$ and then by the compactness of~$\FMetricSpace$ and finiteness of~$\Collection$ there is some~$\delta >0$ such that for all~$\collectionelt \in \Collection$
 \[
  \overline{\alpha(\collectionelt)} \subseteq
  \lbrace
   \fmetricspaceelt \in \FMetricSpace
   \, \vert \,
   d ( \fmetricspaceelt , \FMetricSpace \backslash \collectionelt ) > \delta
  \rbrace
  =: \collectionelt^{-\delta}.
 \]
 
 As in the proof of Proposition~\ref{Propn:Stronger33}, before proving the proposition in full generality Bartels-L{\"u}ck-Reich consider the special case of a simplicial complex of dimension~$\dimn$ where each element of~$\Collection$ is the interior of a subcomplex.
 They use the barycentric subdivision to construct such a map~$\beta$.
 
 In the general case they start with an open cover~$\RefQuotient$ of~$\FMetricSpace$ with $\dim(\RefQuotient) \leq \dimn$ such that the diameter of any element of~$\RefQuotient$ is bounded by $\delta / 3$.
 To make this $\GroupFin$-equivariant they set~$\RefQuotient_{\GroupFin} = \GroupFin \RefQuotient = \lbrace \groupfinelt \refquotelt \, \vert \, \groupfinelt \in \GroupFin, \refquotelt \in \RefQuotient \rbrace$.
 This is where they use the order of $\GroupFin$ because they only get~$\dim(\RefQuotient_{\GroupFin}) \leq (n+1) \lvert \GroupFin \rvert$.
 
 However, if we define~$\RefQuotient$ to be the cover of~$\FMetricSpace$ consisting of all the open balls~$B_{\delta/3}(z)$ for~$\fmetricspaceelt \in \FMetricSpace$ then we can apply Proposition~\ref{Propn:EquivariantRefinements} (to~$\RefQuotient$) to get an open $\GroupFin$-refinement~$\RefQuotient_{\GroupFin} := \Refinement$ of~$\RefQuotient$ with~$\dim(\RefQuotient_{\GroupFin}) \leq \dimn$.
 
 The diameter of every element of a cover being bounded by~$\delta /3$ is a smallness property and is inherited by the refinement~$\RefQuotient_{\GroupFin}$.
 Therefore we can continue the proof given in \cite{BartelsLueckReich2008ECHG}*{Proposition~3.2} with our version of~$\RefQuotient_{\GroupFin}$, whose dimension does not depend on the order of~$\GroupFin$, and we obtain our stronger version of the proposition.
\end{proof}

Now that we have our stronger versions of \cite{BartelsLueckReich2008ECHG}*{Propositions~3.2 and~3.3} the rest of the proof of \cite{BartelsLueckReich2008ECHG}*{Proposition~4.1} works and gives us Theorem~\ref{Thm5.6}.

\section{The existence of the long and thin covers}

Under the assumptions listed in Convention~\ref{Conventions}, Theorem~\ref{Thm5.6} gives us a way to cover a cocompact subset of~$\FSlong$ but it does not give us a cover of~$\FSshort$.
The existence of such covers was formulated as \cite{BartelsLueck2010CAT0Flow}*{Definition~5.5} and is repeated below.
\begin{definition}\label{Defn:LongFCoversAtInfinity}
 The flow space~$\FS$ \emph{admits long $\Family$-covers at infinity and periodic flow lines} if there is some number $M \in \NN$ such that for every~$\gamma >0$ there is a number~$\epsilon >0$ and a collection~$\mathcal{V}$ of open $\Family$-subsets of~$\FS$ satisfying
 \begin{enumerate}
  \item $\mathcal{V}$ is $G$-invariant,
  \item $\dim \mathcal{V} \leq M$,
  \item there is a compact~$K \subseteq \FS$ with
   \begin{itemize}
    \item $\FSshort \cap GK = \emptyset$,
    \item for every $z \in \FS$ there is a set~$V \in \mathcal{V}$ with $B_{\epsilon} \big( \flow_{[-\gamma,\gamma]} (z) \big) \subseteq V$.
   \end{itemize}
 \end{enumerate}
\end{definition}

If a flow space admits long $\Family$-ocvers at infinity and periodic flow lines then we can find a cover of the entire flow space, which is made formal in the following theorem.

\begin{theorem}\label{Thm5.7}
 Under the assumptions given in Convention~\ref{Conventions}, if~$\FS$ admits long $\Family$-covers at infinity and periodic flow lines and $\Family$ contains all virtually cyclic subgroups of~$G$ then there exists a number $\hat{N} \in \NN$ such that for any~$\alpha >0$ there is some~$\epsilon >0$ and an open $\Family$-cover $\mathcal{U}$ of~$\FS$ satisfying
 \begin{enumerate}
  \item $\dim \mathcal{U} \leq \hat{N}$,
  \item for every $z \in \FS$ there is a set $U \in \mathcal{U}$ with $B_{\epsilon} \big( \flow_{[-\alpha,\alpha]} (z) \big) \subseteq U$,
  \item $\mathcal{U} / G$ is finite.
 \end{enumerate}
\end{theorem}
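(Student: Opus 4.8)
The plan is to combine the two covers we now have available: the cover of a cocompact piece of $\FSlong$ coming from Theorem~\ref{Thm5.6}, and the cover of (a neighbourhood of the flowed $\LongCoverConstInt$-orbits in) all of $\FS$ coming from the hypothesis that $\FS$ admits long $\Family$-covers at infinity and periodic flow lines. This is exactly the structure of the proof of \cite{BartelsLueckReich2008ECHG}*{Proposition~5.3} (respectively \cite{BartelsLueck2010CAT0Flow}*{Theorem~5.7}), and since our Theorem~\ref{Thm5.6} is a strict strengthening of \cite{BartelsLueck2010CAT0Flow}*{Theorem~5.6} that drops the bound on the order of finite subgroups, the same argument goes through verbatim. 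So I would say explicitly that the proof is that of \cite{BartelsLueck2010CAT0Flow}*{Theorem~5.7}, and then indicate the structure for the reader's convenience.

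The key steps, in order, are as follows. First, fix $\alpha > 0$. Apply Theorem~\ref{Thm5.6} to obtain $M \in \NN$ (independent of $\alpha$) and, for a suitable $\gamma > 0$ and any compact $K \subseteq \FSlong$, a $G$-invariant collection $\IndCover$ of open $\Vcyc$-subsets with $\dim \IndCover \leq M$, $G\backslash\IndCover$ finite, and $\flow_{\LongCoverConstInt}(z) \subseteq \indcoverelt$ for some $\indcoverelt \in \IndCover$ for all $z \in GK$. Second, feed this same $\gamma$ into the hypothesis that $\FS$ admits long $\Family$-covers at infinity and periodic flow lines to obtain a number $M' \in \NN$ (the ``$M$'' of Definition~\ref{Defn:LongFCoversAtInfinity}), some $\epsilon' > 0$, a compact $K \subseteq \FS$ with $\FSshort \cap GK = \emptyset$, and a $G$-invariant collection $\mathcal{V}$ of open $\Family$-subsets with $\dim\mathcal{V} \leq M'$ and $B_{\epsilon'}(\flow_{[-\gamma,\gamma]}(z)) \subseteq V$ for some $V \in \mathcal{V}$ for every $z \in \FS$. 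Third, since $\FSshort \cap GK = \emptyset$ the set $GK$ lies in $\FSlong$, so one checks (using the openness of $\FSlong$ along flowed orbits and uniform continuity of the flow) that there is an $\epsilon > 0$ with $B_\epsilon(\flow_{\LongCoverConstInt}(z)) \subseteq \indcoverelt$ for some $\indcoverelt \in \IndCover$ whenever $z \in GK$, while outside $GK$ the collection $\mathcal{V}$ already does the job. Fourth, set $\mathcal{U} := \IndCover \cup \mathcal{V}$ (after shrinking $\epsilon$ so that both inclusions hold simultaneously), which is $G$-invariant, has $\dim\mathcal{U} \leq \dim\IndCover + \dim\mathcal{V} + 1 \leq M + M' + 1 =: \hat{N}$, has $\mathcal{U}/G$ finite since both $\IndCover/G$ and $\mathcal{V}/G$ are finite, and satisfies the flowed-orbit covering condition by construction; finally one notes every element is an $\Family$-subset because $\Vcyc \subseteq \Family$.

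The main obstacle — and the only genuinely delicate point — is the passage in the third step from the ``$\flow_{\LongCoverConstInt}(z) \subseteq \indcoverelt$'' statement of Theorem~\ref{Thm5.6} to a uniform thickening ``$B_\epsilon(\flow_{\LongCoverConstInt}(z)) \subseteq \indcoverelt$'' valid for all $z \in GK$ with a single $\epsilon$. This uses that $GK$ is a union of flowed-orbit segments sitting inside the open sets of the $G$-invariant, cocompact-mod-$G$ collection $\IndCover$, together with the uniform continuity of the flow from Convention~\ref{Conventions} and a Lebesgue-number-type argument on the compact set $K$; it is carried out in detail in the proof of \cite{BartelsLueck2010CAT0Flow}*{Theorem~5.7}, and nothing there interacts with the order of finite subgroups, so I would simply cite that argument rather than reproduce it.
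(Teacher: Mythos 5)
Your proposal takes essentially the same approach as the paper: the paper's proof of Theorem~\ref{Thm5.7} consists precisely of the one-line observation that it is \cite{BartelsLueck2010CAT0Flow}*{Theorem~5.7} with our Theorem~\ref{Thm5.6} substituted for \cite{BartelsLueck2010CAT0Flow}*{Theorem~5.6}, which is exactly what you propose. Your more detailed sketch of how the two covers are combined is additional exposition the paper itself omits, and agrees with the underlying argument in \cite{BartelsLueck2010CAT0Flow}.
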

\begin{proof}
 This is \cite{BartelsLueck2010CAT0Flow}*{Theorem~5.7} except we use our Theorem~\ref{Thm5.6} instead of their \cite{BartelsLueck2010CAT0Flow}*{Theorem~5.6}.
\end{proof}

Let's list some consequences. Wegner introduced the condition strongly transfer reducible to show that CAT(0)-groups satisfy the Farrell-Jones conjecture (see \cite{Wegner2012FJCforCAT}*{Definition~3.1} and \cite{Wegner2012FJCforCAT}*{Theorem~1.1}).

Then, using work of \cite{BartelsLueck2010CAT0Flow}, Wegner proved that a CAT$(0)$-group is strongly transfer reducible (see \cite{Wegner2012FJCforCAT}*{Theorem~3.4}).
We can generalise this slightly to allow the group to act non-cocompactly.

\begin{theorem}\label{thm:longfimplFJ} Let $G$ be group and $X$ be a $G$-space. Assume that
\begin{enumerate}
\item $X$ is a proper CAT(0)-space,
\item $X$ has finite covering dimension,
\item The group action is proper and isometric,
\item $FS(X)$ admits long $\mathcal{F}$-covers at infinity and periodic flow lines,
\end{enumerate}
then $G$ is strongly transfer reducible over the family $\Vcyc\cup \mathcal{F}$. 
\end{theorem}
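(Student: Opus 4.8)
The plan is to reduce this to Wegner's theorem that a cocompact CAT(0)-group is strongly transfer reducible over~$\Vcyc$ (\cite{Wegner2012FJCforCAT}*{Theorem~3.4}), by isolating the single place in his proof where cocompactness is used --- the appeal to \cite{BartelsLueck2010CAT0Flow}*{Theorem~5.7} for long thin covers of the flow space --- and replacing it by our Theorem~\ref{Thm5.7}, which supplies such covers from hypothesis~(4) and without any bound on the orders of finite subgroups.

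First I would form the flow space~$\FS = \FS(X)$ of~\cite{BartelsLueck2010CAT0Flow} attached to the proper, finite-dimensional CAT(0)-space~$X$ carrying the proper isometric $G$-action. All the structure that will be needed --- that $\FS(X)$ is a proper metric space with an isometric $G$-action and a $G$-equivariant, uniformly continuous flow~$\flow$, and that $\FSnonstationary$ is locally connected, locally compact, finite dimensional with a proper $G$-action --- is a consequence of the CAT(0)-geometry alone; cocompactness of the action and a bound on finite subgroups play no role. Thus $(G,\Vcyc\cup\Family,\FS,\flow)$ satisfies the assumptions of Convention~\ref{Conventions} (note that $\Vcyc\cup\Family$ is again a family of subgroups).

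Next I would observe that every $\Family$-subset of~$\FS$ is a fortiori a $(\Vcyc\cup\Family)$-subset, so hypothesis~(4) gives that $\FS$ admits long $(\Vcyc\cup\Family)$-covers at infinity and periodic flow lines; and $\Vcyc\cup\Family$ certainly contains all virtually cyclic subgroups of~$G$. Hence Theorem~\ref{Thm5.7} applies and produces a number~$\hat N$ so that for every~$\alpha>0$ there are~$\epsilon>0$ and an open $(\Vcyc\cup\Family)$-cover~$\mathcal{U}$ of~$\FS$ with $\dim\mathcal{U}\le\hat N$, with $\mathcal{U}/G$ finite, and with $B_{\epsilon}\big(\flow_{[-\alpha,\alpha]}(z)\big)\subseteq U$ for some $U\in\mathcal{U}$ and every $z\in\FS$. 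This is exactly the output that Wegner's argument draws from \cite{BartelsLueck2010CAT0Flow}*{Theorem~5.7}.

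Finally I would run the proof of \cite{Wegner2012FJCforCAT}*{Theorem~3.4} unchanged. For each finite $S\subseteq G$ the $\beta$-controlled homotopy $S$-action and the compact model space required by the definition of strong transfer reducibility are built from the CAT(0)-geometry of~$X$ --- and since $X$ is proper, closed balls of~$X$ are compact, so that ingredient survives the loss of cocompactness --- and the $(\Vcyc\cup\Family)$-cover demanded in that definition is obtained by pulling~$\mathcal{U}$ back along the accompanying maps into~$\FS$; its dimension is bounded in terms of~$\hat N$ and its $G$-quotient is finite because $\mathcal{U}/G$ is. The main thing to verify --- and the point I expect to be the only real work --- is that cocompactness entered \cite{Wegner2012FJCforCAT}*{Theorem~3.4} (equivalently \cite{BartelsLueck2010CAT0Flow}*{Theorems~5.6 and~5.7}) only through this long-thin-cover input and nowhere else; granting that, the construction above shows $G$ is strongly transfer reducible over~$\Vcyc\cup\Family$.
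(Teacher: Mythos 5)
Your proposal is correct and follows essentially the same route as the paper: form $FS(X)$, note it satisfies Convention~\ref{Conventions} without cocompactness or any bound on finite subgroups, feed hypothesis~(4) into our Theorem~\ref{Thm5.7} in place of \cite{BartelsLueck2010CAT0Flow}*{Theorem~5.7}, and then run Wegner's argument for \cite{Wegner2012FJCforCAT}*{Theorem~3.4} unchanged. The paper's proof is just a terser statement of the same modifications.
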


\begin{remark} Hence $G$ satisfies the $K$-theoretic Farrell-Jones conjecture 
 relative to the family $\Vcyc \cup \mathcal{F}$ in all dimensions (by \cite{Wegner2012FJCforCAT}*{Theorem~1.1}).
\end{remark}

\begin{proof}[Proof of Theorem~\ref{thm:longfimplFJ}]
Basically Wegner's proof (\cite{Wegner2012FJCforCAT}*{Theorem~3.4}) also works here. Let us just say what the differences are.  First, he uses \cite{BartelsLueck2010CAT0Flow}*{Subsection~6.3} to show that $FS(X)$ admits long $\mathcal{F}$-covers at infinity and periodic flow lines --- here we just assume it.

Second, he uses \cite{BartelsLueck2010CAT0Flow}*{Theorem~5.7} (which needs long $\mathcal{F}$-covers as an assumption).
The condition that there is a bound on the order of finite subgroups from \cite{BartelsLueck2010CAT0Flow}*{Theorem~5.6} need not hold here, but we can use our improved version Theorem~\ref{Thm5.7} which doesn't need this condition.

All the other conditions hold as explained in \cite{BartelsLueck2010CAT0Flow}*{Section~6.2}.

These are all the neccesary modifications.
\end{proof}

For example in \cite{HenrikThesis} it is shown that the general linear group over the polynomial ring over a finite field admits long $\mathcal{F}$-covers at infinity and periodic flow lines for some family $\mathcal{F}$. These groups do not have a bound on the order of finite subgroups.

Theorem~\ref{thm:longfimplFJ} (together with \cite{Wegner2012FJCforCAT}*{Theorem~1.1}) is then used to show that they satisfy the Farrell-Jones conjecture relative to some family $\mathcal{F}$. This is a key step in showing the full conjecture for them.

One also gets using \cite{BartelsLueck2012Borel}*{Theorem~1.1 $(ii)$} that $G$ satisfies the $L$-theoretic Farrell-Jones conjecture relative to the family $\mathcal{F}_2\cup \Vcyc$, where $\mathcal{F}_2$ denotes the family of all subgroups of $G$ that have a subgroup from $\mathcal{F}$ of index at most two.
By \cite{BartelsLueckReichRueping2012GLnZ}*{Theorem~5.1} we can show that for any finite group $F$ the group $G\wr F$ satisfies the $K$- and $L$-theoretic Farrell-Jones conjecture relative to the family 
to all subgroups that are virtually a finite product of groups from $\mathcal{F}$.

\bibliographystyle{amsalpha}
\bibliography{../AdamsBib/GroupTheory}

\end{document}